\documentclass[a4paper,12pt]{amsart}

\pdfoutput=1
\usepackage{setspace}
\usepackage[left=28mm,head=30mm,bottom=30mm,foot=20mm,right=28mm]{geometry}
\usepackage{amsmath,amsthm,amsfonts,amssymb,mathtools,mathrsfs,url,bm,enumitem,stackengine}
\usepackage{newtxtext}
\usepackage[varvw]{newtxmath}
\usepackage{graphicx}
\usepackage{tikz}
\usepackage{tikz,amsmath,amssymb}
 
\newcommand{\axesframe}{%
  \draw (0,0) rectangle (2,2);
  \draw[->] (0,0)--(2.2,0) node[right]{$\alpha$};
  \draw[->] (0,0)--(0,2.2) node[above]{$\beta$};
  \foreach \t/\l in {1/1, {5/3}/\frac53, 2/2} {
    \draw (\t,0)--(\t,-0.04) node[below,font=\scriptsize]{$\l$};
    \draw (0,\t)--(-0.04,\t) node[left,font=\scriptsize]{$\l$};
  }
}

\newcommand{\guides}{%
  \draw[dashed,gray] (1,0)--(1,2);
  \draw[dashed,gray] ({5/3},0)--({5/3},2);
  \draw[dashed,gray] (0,{5/3})--(2,{5/3});
  \draw[dashed,gray] (0,1.5)--(2,0.5);
  \draw[dashed,gray] (1,2)--(2,1.5);
}

\allowdisplaybreaks
\usepackage{hyperref}
\hypersetup{
	colorlinks=true,
	linkcolor=blue,
	citecolor=red,
}
\usepackage{appendix}

\mathtoolsset{showonlyrefs=true}
\theoremstyle{definition}

\newtheorem{remark}{Remark}[section]

\theoremstyle{plain}

\newtheorem{theorem}{Theorem}[section]

\newtheorem{corollary}{Corollary}[section]

\newtheorem{lemma}{Lemma}[section]

\renewcommand{\phi}{\varphi}
\let\phi\varphi
\let\epsilon\varepsilon

\def\R{\mathbb R}

\def\norm#1{\Vert#1\Vert}

\def\vu{ u }

\begin{document}

	\title[Liouville Theorems for Fractional MHD]{Liouville Theorems for the Stationary Fractional Magnetohydrodynamic Equations}

\author{Albert Ai}
\address{Beijing International Center for Mathematical Research, Peking University, No. 5 Yiheyuan Road Haidian District, Beijing 100871, People’s Republic of China}
\email{albertai@bicmr.pku.edu.cn}		
	
\author{Gast\'{o}n Vergara-Hermosilla}
\address{School of Mathematics, Harbin Institute of Technology, Xidazhi Street, Harbin 150001,
Heilongjiang, People’s Republic of China}
\email{gaston.v-h@outlook.com (corresponding author)}	

	\maketitle

  \begin{abstract}
 We establish Liouville-type theorems for the three-dimensional stationary fractional magnetohydrodynamic (MHD) system with different orders of dissipation.
 Our first result shows that every smooth solution satisfying the natural homogeneous Sobolev energy conditions is trivial whenever $1\leq\alpha\leq 5/3$ and $3\leq\alpha+2\beta\leq5$, 
 without any additional integrability assumptions on the velocity or magnetic fields.
  This extends previous results to new parameter regimes, including cases with $\beta<1$. Beyond this region, we prove a complementary Liouville theorem under suitable integrability conditions only on the radial components of the velocity and magnetic fields. The proofs rely on the Caffarelli--Silvestre extension and localized energy estimates.
 \end{abstract}

 \section{Introduction and main results}
The Liouville problem for the stationary Navier--Stokes equations in $\mathbb R^3$ asks whether a smooth solution must vanish identically under suitable assumptions on its behavior at infinity. A classical version of this problem concerns solutions with finite Dirichlet integral,
\[
\int_{\mathbb R^3}|\nabla u|^2\,dx<\infty,
\]
and remains open in its full generality.
An important contribution was made by Galdi, who proved that the additional assumption $u\in L^{9/2}(\mathbb R^3)$ is sufficient to guarantee triviality. Since then, a number of Liouville-type criteria have been established under different integrability and regularity assumptions; see, for instance, \cite{chae14,CW,Kozonoetal,Seregin16,Sereginetwang,Tan25,V2026}
 and the references therein.

A natural extension of this problem is obtained by replacing the Laplacian with a fractional power $(-\Delta)^{\alpha/2}$. 
This type of dissipation, which goes back to the work of J.-L.~Lions \cite{Lions}, allows one to study stationary flows with different orders of dissipation. 
For the stationary fractional Navier--Stokes equations, Liouville-type results have been obtained in several settings; see, among others, 
\cite{wx18,yang22,J-me,LiuZuo,EVH2026,LeeLee26,LiuTan26}. The relation between the dissipation exponent and the integrability of the 
velocity field is particularly relevant here. Indeed, the homogeneous Sobolev embedding
\[
\dot H^{\alpha/2}(\mathbb R^3)
\hookrightarrow L^{\frac{6}{3-\alpha}}
(\mathbb R^3
)
\]
shows that the exponent $9/2$ in Galdi's criterion corresponds precisely to $\alpha=5/3$. This observation provides a natural reference point for understanding the range of fractional dissipation considered in this paper.

Our main interest is the stationary magnetohydrodynamic (MHD) system, which describes the interaction between the velocity field of an 
electrically conducting fluid and its magnetic field.
 In this system, the velocity $u$ and the magnetic field $b$ are coupled through the Lorentz force and the induction equation, while $\Pi$ denotes the total pressure. 
 For the classical stationary MHD equations, several Liouville-type theorem are available in the literature. See, for instance, \cite{chaeweng,chaewolfMHD,chaekimwolf,schulz,chaeJMP}. 
The fractional setting introduces an additional difficulty in that the velocity and magnetic fields may have different dissipation orders, denoted by 
$\alpha$ and $\beta$, respectively. 
As a consequence, the two fields need not have the same regularity, and 
the interaction between their dissipative 
effects becomes an essential part of the analysis.

To the best of our knowledge, relatively few works have addressed Liouville-type theorems for the stationary fractional MHD system. 
Among the relevant contributions, we mention the results of Zeng \cite{zeng} and Lee--Lee \cite{LeeLee26}. 
These works provide important criteria for the triviality of solutions, but leave room to investigate parameter ranges in which the two dissipation orders interact more directly. 
Thus, our purpose is to establish such criteria under the natural energy assumptions
\[
u\in\dot H^{\alpha/2}(\mathbb R^3),
\qquad
b\in\dot H^{\beta/2}(\mathbb R^3),
\]
without imposing additional integrability assumptions on either field in the parameter ranges covered by our first theorem.

More precisely, we consider the stationary fractional MHD system in $\mathbb R^3$:
\begin{equation}\label{SMHD}
\begin{cases}
(-\Delta)^{\alpha/2}u+(u\cdot\nabla)u+\nabla\Pi
=(b\cdot\nabla)b,\\[2mm]
(-\Delta)^{\beta/2}b+(u\cdot\nabla)b
=(b\cdot\nabla)u,\\[2mm]
\nabla\cdot u=0,\qquad \nabla\cdot b=0,
\end{cases}
\qquad 0<\alpha,\beta<2,
\end{equation}
with the uniform decay conditions
\[
u(x) \to 0, 
\quad 
b(x) \to 0, 
\quad 
\text{as } 
|x|\to \infty
.
\]

Here, the fractional Laplacians are defined through the Fourier multipliers with symbols $|\xi|^\alpha$ and $|\xi|^\beta$. The system includes the stationary fractional Navier--Stokes equations when $b\equiv0$, while the choice $\alpha=\beta=2$ formally recovers the classical stationary MHD system.

Our first result identifies a region in the 
$(\alpha,\beta)$-plane
 where the energy assumptions alone imply that both fields vanish. 
 To continue, we introduce the set  
\[
\mathcal A:=\Bigl\{\alpha,\beta \in (0, 2) \ :\ 1\le\alpha\le\tfrac53,\ 3\le\alpha+2\beta\le5 \Bigr\}.
\]
The significance of this region is that the admissible dissipation orders are not restricted to zones of the plane where $\beta\ge 1$. Instead, the mixed interaction between the velocity and magnetic fields leads to a more flexible condition  which exploits the combination $\alpha+2\beta$.
\begin{theorem}\label{thm1}
Let $(u,b)$ be a smooth solution of \eqref{SMHD} such that
\[
u\in\dot H^{\alpha/2}(\mathbb R^3),
\qquad
b\in\dot H^{\beta/2}(\mathbb R^3),
\]
with $(\alpha,\beta)\in\mathcal A$. Then
\[
u\equiv0,\qquad b\equiv0.
\]
\end{theorem}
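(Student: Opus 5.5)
We test the system against localized versions of $u$ and $b$ and show that every resulting term vanishes as the cutoff radius $R\to\infty$. Fix a radial cutoff $\phi_R(x)=\phi(x/R)$ with $\phi\equiv1$ on $B_1$ and $\operatorname{supp}\phi\subset B_2$. We multiply the first equation of \eqref{SMHD} by $u\phi_R$ and the second by $b\phi_R$, integrate over $\R^3$, and add.

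\textbf{Dissipative terms.} The quadratic terms $\int(-\Delta)^{\alpha/2}u\cdot u\phi_R$ and $\int(-\Delta)^{\beta/2}b\cdot b\phi_R$ are nonlocal, so we handle them with the Caffarelli--Silvestre extension. Let $U$ and $B$ be the extensions of $u$ and $b$ to $\R^4_+$ with weights $y^{1-\alpha}$ and $y^{1-\beta}$. Integrating by parts against $\phi_R(x,y)$ in the upper half space gives
\[
c_\alpha\int_{\R^4_+}y^{1-\alpha}|\nabla U|^2\phi_R+c_\beta\int_{\R^4_+}y^{1-\beta}|\nabla B|^2\phi_R
\]
plus commutator terms of the form $\int y^{1-\alpha}\nabla\phi_R\cdot\nabla U\,U$. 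By Cauchy--Schwarz and the weighted trace/Hardy-type bounds for the extension, these commutators are controlled by $\|u\|_{\dot H^{\alpha/2}}$ times a tail of the energy, so they vanish as $R\to\infty$. The left side therefore tends to $c(\|u\|_{\dot H^{\alpha/2}}^2+\|b\|_{\dot H^{\beta/2}}^2)$.

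\textbf{Nonlinear terms.} Using $\nabla\cdot u=\nabla\cdot b=0$, the transport terms reduce to $\tfrac12\int(|u|^2+|b|^2)\,u\cdot\nabla\phi_R$. The Lorentz and stretching terms combine: $\int(b\cdot\nabla)b\cdot u\phi_R+\int(b\cdot\nabla)u\cdot b\phi_R$ reduces to $-\int(b\cdot u)\,b\cdot\nabla\phi_R$. The pressure gives $\int\Pi\,u\cdot\nabla\phi_R$. Since $|\nabla\phi_R|\lesssim R^{-1}\mathbf 1_{R<|x|<2R}$, all of these are annulus integrals. We use the Sobolev embeddings $u\in L^{p}$ with $p=\frac{6}{3-\alpha}$ and $b\in L^{q}$ with $q=\frac{6}{3-\beta}$. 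For the pressure, $-\Delta\Pi=\partial_i\partial_j(u_iu_j-b_ib_j)$, and Calder\'on--Zygmund gives $\Pi\in L^{p/2}+L^{q/2}$.

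\textbf{Exponent bookkeeping.} By H\"older on the annulus $A_R$:
\begin{itemize}
\item $R^{-1}\int_{A_R}|u|^3\lesssim R^{-1+3(1-3/p)}\|u\|_{L^p(A_R)}^3$, where the exponent is $\le0$ exactly when $p\le 9/2$, i.e.\ $\alpha\le5/3$. At equality, the tail factor $\|u\|_{L^p(A_R)}\to0$ finishes the estimate.
\item For $u|b|^2$ and $\Pi_b u$, the scaling exponent is $-1+3(1-1/p-2/q)=(\alpha+2\beta-5)/2\cdot(\ldots)$. Explicitly, $1/p+2/q=(3-\alpha)/6+(3-\beta)/3$, so the exponent equals $-1+3-\tfrac{3-\alpha}{2}-(3-\beta)=\tfrac{\alpha+2\beta-5}{2}\cdot\ldots$. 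This is nonpositive precisely when $\alpha+2\beta\le5$, which is the upper constraint in $\mathcal A$.
\end{itemize}
Again, the borderline case is closed by the vanishing of the tails.

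\textbf{Main obstacle.} The lower constraints $\alpha\ge1$ and $\alpha+2\beta\ge3$ are needed for H\"older to be admissible, i.e.\ the exponents must sum to at most $1$ on a bounded set. If instead they sum to more than $1$, one cannot simply enlarge the domain, and the naive H\"older/Lebesgue pairing fails. The fix is to bound $\Pi$ and the mixed terms locally via $L^{r}$ with $r\le$ the natural exponent: on a bounded annulus any larger Lebesgue exponent reduces to a smaller one with a favorable power of $R$. The genuine concern is that $\Pi$ is only in $L^{p/2}+L^{q/2}$ with possibly $p/2<1$ or $q/2<1$. Here $\alpha\ge1$ gives $p\ge3$, and $\alpha+2\beta\ge3$ ensures the product $ub$ sits in $L^s$ with $s\ge1$, so that the Riesz transforms act. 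I expect the careful splitting $\Pi=\Pi_u+\Pi_b+\Pi_{ub}$ with these borderline exponents to be the hardest step.

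\textbf{Conclusion.} Letting $R\to\infty$ gives $\|u\|_{\dot H^{\alpha/2}}=\|b\|_{\dot H^{\beta/2}}=0$. Hence $u$ and $b$ are constant, and the decay at infinity forces $u\equiv b\equiv0$.
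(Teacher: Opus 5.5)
Your proof is correct, and its nonlinear part is the paper's argument. It uses the same localized energy identity, the same Calder\'on--Zygmund split of $\Pi$ into pieces in $L^{p/2}$ and $L^{q/2}$, and the same H\"older estimates on $\mathcal C_R$. These give the powers $R^{(3\alpha-5)/2}$ and $R^{(\alpha+2\beta-5)/2}$, and the borderline cases are closed by $\|u\|_{L^p(\mathcal C_R)}\to0$. Delete the stray ``$\cdot(\ldots)$'' in your computation: the exponent is exactly $\frac{\alpha+2\beta-5}{2}$.

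\textbf{Where you differ: the extension error term.} The paper integrates by parts twice with $\Phi=\psi_R(x)\chi_R(y)$, which leaves $\int|u^*|^2\,\overline{\operatorname{div}}(y^\mu\bar\nabla\Phi)$. It then proves Lemma~\ref{lemma.with.E} through the Poisson kernel: Young's inequality, the decay $\|u^*(\cdot,y)\|_{L^p}\to0$ as $y\to\infty$ (by density), and dominated convergence after the rescaling $y=Rt$.

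You integrate by parts once and control the commutator $\int y^\mu u^*\,\bar\nabla u^*\cdot\bar\nabla\Phi_R$ by Cauchy--Schwarz and the weighted Hardy inequality
\[
\int_{\mathbb R^4_+}y^{\mu}\,\frac{|w|^2}{|x|^2+y^2}\,dx\,dy\;\le\;\frac{4}{(3-\alpha)^2}\int_{\mathbb R^4_+}y^{\mu}|\bar\nabla w|^2\,dx\,dy .
\]
This follows from the one-dimensional Hardy inequality in $r=|(x,y)|$: the radial weight is $r^{3+\mu}=r^{4-\alpha}$ and $4-\alpha>1$. It extends to $\dot H^1(\mathbb R^4_+,y^\mu)$ by density, and likewise holds with $y^\nu$ for $b^*$. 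Since $|(x,y)|\ge R$ on $\operatorname{supp}\bar\nabla\Phi_R$, the commutator is at most $C$ times the square root of the energy times the square root of $\int_{|(x,y)|\ge R}y^\mu|u^*|^2/(|x|^2+y^2)$. The latter is a tail of a convergent integral, so it tends to zero.

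Your route is shorter and uses only the finiteness of the extension energy, with no kernel and no Sobolev embedding at this step. The paper's route is more hands-on but fully explicit. You should state the Hardy inequality precisely rather than appeal to ``trace/Hardy-type bounds''.

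\textbf{Discard the ``main obstacle'' paragraph.} It misidentifies the difficulty and contains incorrect claims.
\begin{itemize}
\item In $\mathcal A$ the H\"older exponents already sum to at most one. This is exactly what $\alpha\ge1$ and $\alpha+2\beta\ge3$ say: $1-\frac3p=\frac{\alpha-1}{2}\ge0$ and $1-\frac1p-\frac2q=\frac{\alpha+2\beta-3}{6}\ge0$. No further fix is needed.
\item Since $p,q>2$ for all $\alpha,\beta>0$, the exponents $p/2$ and $q/2$ always lie in $(1,\infty)$, so Calder\'on--Zygmund applies unconditionally.
\item There is no cross pressure $\Pi_{ub}$, because $-\Delta\Pi=\partial_i\partial_j(u_iu_j-b_ib_j)$. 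The mixed term $(u\cdot b)(b\cdot\nabla\phi_R)$ is simply bounded by $|u||b|^2|\nabla\phi_R|$.
\end{itemize}
The pressure step is routine. List the $\Pi_u\,u$ term explicitly (it has the same exponent as $|u|^3$), and state that you use the Calder\'on--Zygmund normalization of $\Pi$, as the paper does.
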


\begin{remark}\label{rem:range}
The parameter region in Theorem~\ref{thm1} strictly contains the parameter regions obtained by Zeng \cite{zeng} and Lee--Lee \cite{LeeLee26},which are given, respectively, by
\[
\mathcal{B}_{\text{Zeng}} = \Bigl\{(\alpha,\beta)\ :\ 
1\le\alpha,\beta
\le
\tfrac{5}{3}
\Bigr\}
\]
and
\[
\mathcal{B}_{\text{Lee-Lee}} = 
\Bigl\{(\alpha,\beta)\ :\ 1\le\alpha<\tfrac{5}{3},\quad \tfrac{5}{3}<\beta<2,\quad \alpha+2\beta<5\Bigr\}.
\]
In particular, our result covers points on the line $\alpha
+
2\beta=5$ with $1\leq\alpha<5/3$,  as well as a triangular region in which $\beta<1$. In this latter range,
the Sobolev embedding for the magnetic field gives an exponent below $3$,
so the theorem applies even when the magnetic field does not belong to the Lebesgue spaces required by the previous criteria. The key point is that we retain the mixed terms involving $u$ and $b$ in the energy-flux estimates,
rather than estimating them separately by purely cubic terms. This preserves the interaction between the two dissipation orders and allows us to work under the single balance condition $3\leq\alpha+2\beta\leq5$,
without requiring an independent lower bound on $\beta$.
In the following figure we sketch these sets alongside $\mathcal A$ to provide the reader with a graphical comparison.

\begin{figure}[htbp]
\centering
\begin{minipage}{0.325\textwidth}\centering
\begin{tikzpicture}[scale=1.75]
\fill[blue!15] (1,1)--({5/3},{2/3})--({5/3},{5/3})--(1,2)--cycle;
\axesframe
\guides
\draw (1,1)--({5/3},{2/3})--({5/3},{5/3})--(1,2)--cycle;
\node at (1.34,1.4) {$\mathcal A$};
\end{tikzpicture}
\end{minipage}
\begin{minipage}{0.325\textwidth}\centering
\begin{tikzpicture}[scale=1.75]
\fill[blue!15] (1,{5/3})--({5/3},{5/3})--(1,2)--cycle;
\axesframe
\guides
\draw (1,{5/3})--(1,2);
\draw[dashed] (1,{5/3})--({5/3},{5/3})--(1,2);
\node[font=\small] at (1.5,1.35) {$\mathcal B_{\text{Lee-Lee}}$};
\end{tikzpicture}
\end{minipage}
\begin{minipage}{0.325\textwidth}\centering
\begin{tikzpicture}[scale=1.75]
\fill[blue!15] (1,1) rectangle ({5/3},{5/3});
\axesframe
\guides
\draw (1,1) rectangle ({5/3},{5/3});
\node at (1.33,1.33) {$\mathcal B_{\text{Zeng}}$};
\end{tikzpicture}
\end{minipage}
\caption{The sets $\mathcal A$, $\mathcal B_{\text{Lee-Lee}}$ and $\mathcal B_{\text{Zeng}}$ in the common window $(0,2)^2$. Solid edges belong to the set, while black dashed edges do not. 
The gray dashed lines are the reference lines $\alpha=1$, $\alpha=\frac53$, $\beta=\frac53$, $\alpha+2\beta=3$ and $\alpha+2\beta=5$, common to the three panels.}
\end{figure}
\end{remark}

An immediate consequence of Theorem~\ref{thm1} is a Liouville theorem for the stationary fractional Navier--Stokes equations. In fact, by setting $b\equiv0$, the MHD system reduces to the fractional Navier--Stokes system, and the magnetic dissipation exponent $\beta$ can be chosen freely, provided that $(\alpha,\beta)\in\mathcal A$. This observation allows us to obtain the following result throughout the range $1\leq\alpha\leq5/3$, without any additional integrability assumption on the velocity field. To the best of our knowledge, Zeng was the first to establish this conclusion in the same range; see \cite[Corollary 1.5]{zeng}. Our proof, however, follows a different approach.

\begin{corollary}\label{cor:NS}
Let $1\leq\alpha\leq\frac53$, and let
$u\in\dot H^{\frac\alpha2}(\mathbb R^3)$ be a smooth solution of
\[
(-\Delta)^{\frac\alpha2}u
+(u\cdot\nabla)u+\nabla\Pi=0,
\qquad
\nabla\cdot u=0
\quad\text{in }\mathbb R^3.
\]
Then $u\equiv0$.
\end{corollary}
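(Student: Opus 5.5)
The plan is to deduce the corollary directly from Theorem~\ref{thm1} by viewing the fractional Navier--Stokes solution as an MHD solution with vanishing magnetic field. Given $1\le\alpha\le\frac53$ and a smooth solution $u\in\dot H^{\alpha/2}(\mathbb R^3)$, set $b\equiv0$. The induction equation in \eqref{SMHD} then holds trivially, since every term is zero. The Lorentz force $(b\cdot\nabla)b$ vanishes, so the momentum equation of \eqref{SMHD} is exactly the given equation with $\Pi$ as total pressure. Both divergence conditions hold. The smoothness of $(u,b)$ is inherited from $u$.

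It remains to choose $\beta$ with $(\alpha,\beta)\in\mathcal A$; then $b=0\in\dot H^{\beta/2}(\mathbb R^3)$ holds trivially. We need $\beta\in(0,2)$ with $3\le\alpha+2\beta\le5$, that is,
\[
\frac{3-\alpha}{2}\le\beta\le\frac{5-\alpha}{2}.
\]
For $\alpha\in[1,\frac53]$ the left endpoint lies in $[\frac23,1]$ and the right endpoint lies in $[\frac53,2]$. Hence $\beta=1$ works uniformly: it gives $\alpha+2\beta=\alpha+2\in[3,\frac{11}{3}]\subset[3,5]$ and $\beta\in(0,2)$. Theorem~\ref{thm1} then yields $u\equiv0$ (and $b\equiv0$, trivially).

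The only point needing care is whether the hypotheses of Theorem~\ref{thm1} include anything not automatic for $b\equiv0$. The decay condition $u(x)\to0$ as $|x|\to\infty$ accompanies \eqref{SMHD}. If it is regarded as part of the hypotheses rather than a consequence of the energy class, one should either assume it in the corollary as well, or note that for smooth $u\in\dot H^{\alpha/2}$ the embedding into $L^{6/(3-\alpha)}$ is what the proof of Theorem~\ref{thm1} actually uses. For $b$ the decay is trivial. Beyond this bookkeeping I expect no genuine obstacle, since all analytic difficulty is contained in Theorem~\ref{thm1}.
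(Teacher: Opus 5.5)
Your proposal is correct and matches the paper's argument: set $b\equiv0$, choose any $\beta$ with $(\alpha,\beta)\in\mathcal A$ (your explicit choice $\beta=1$ works for every $\alpha\in[1,\frac53]$), and apply Theorem~\ref{thm1}. Your remark on the decay condition is also accurate, since the proof of Theorem~\ref{thm1} uses only the Sobolev embedding $u\in L^{6/(3-\alpha)}(\mathbb R^3)$ and never the pointwise decay of $u$.
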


We next consider the parameter ranges outside $\mathcal A$.
 In this setting, we show that triviality can  still be obtained
  by imposing suitable integrability  assumptions on the radial  components of the velocity 
  and magnetic fields, 
\[
u_\rho:=u\cdot\frac{x}{|x|},
\qquad
b_\rho:=b\cdot\frac{x}{|x|}.
\]
This approach is motivated by recent Liouville-type theorems for the classical and fractional stationary Navier--Stokes equations 
\cite{EVH2026,V2026b}.
In contrast to conditions imposed  on individual Cartesian components, assumptions on  radial components are invariant under rotations about the origin. Moreover, 
they do not require additional integrability of the tangential components of either field.

Our second result extends this perspective to the stationary fractional MHD system. It covers the parameter region outside $\mathcal A$, 
except for a region where the requirements arising from the velocity and magnetic fluxes  are incompatible within our method. 
In particular, our theorem  distinguishes the cases in which the radial integrability assumptions are 
needed for the velocity field,  for the magnetic field,  or for both. 
We present the result below.
 
\begin{theorem}\label{thm.radial}
Let $0<\alpha,\beta<2$, and let $(u,b)$ be a smooth solution of \eqref{SMHD} satisfying
$
u\in\dot H^{\alpha/2}(\mathbb R^3)$,  $b\in\dot H^{\beta/2}(\mathbb R^3)$.
Assume that $u_\rho\in L^{r_1}(\R^3)$ and $b_\rho\in L^{r_2}(\R^3)$ for $r_1,r_2\in[1,\infty]$ such that
\begin{align}
&\text{(R1)}\quad r_1\ge\frac{3}{\alpha}\ \text{ if }\alpha<1,\qquad r_1\le\frac{3}{\alpha-1}\ \text{ if }\alpha>\frac53,\label{R1}\\
&\text{(R2)}\quad r_1\ge\frac{3}{\beta}\ \text{ if }\alpha+2\beta<3,\qquad r_1\le\frac{3}{\beta-1}\ \text{ if }\alpha+2\beta>5,\label{R2}\\
&\text{(R3)}\quad r_2\ge\frac{6}{\alpha+\beta}\ \text{ if }\alpha+2\beta<3,\qquad r_2\le\frac{6}{\alpha+\beta-2}\ \text{ if }\alpha+2\beta>5.\label{R3}
\end{align}
Then $u\equiv 0$ and $b\equiv 0$.
\end{theorem}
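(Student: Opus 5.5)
We follow the scheme that presumably proves Theorem~\ref{thm1}. We pass to the Caffarelli--Silvestre extensions $U^*$ of $u$ (with weight $y^{1-\alpha}$) and $B^*$ of $b$ (with weight $y^{1-\beta}$) on $\R^4_+$. Test the first equation of \eqref{SMHD} against $u\phi_R$ and the second against $b\phi_R$. Here $\phi_R$ is a cutoff on $B_R\subset\R^3$, extended to the half-ball $\{|(x,y)|<R\}$. This gives a localized energy identity of the form
\begin{equation}
\int_{\R^4_+} y^{1-\alpha}|\nabla U^*|^2\phi_R+\int_{\R^4_+} y^{1-\beta}|\nabla B^*|^2\phi_R
= \mathcal L_R+\mathcal N_R .
\end{equation}
The linear boundary terms $\mathcal L_R$ involve $y^{1-\alpha}U^*\nabla U^*\cdot\nabla\phi_R$ and its analogue for $B^*$. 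They tend to zero as $R\to\infty$ because $u\in\dot H^{\alpha/2}$, $b\in\dot H^{\beta/2}$ and the weighted Hardy/trace inequalities are scale invariant. They need no integrability of $u_\rho,b_\rho$. The nonlinear flux $\mathcal N_R$ collects four terms:
\begin{equation}
\int (|u|^2/2+\Pi)\, u\cdot\nabla\phi_R,\qquad \int (u\cdot b)\, b\cdot\nabla\phi_R,\qquad \int |b|^2\, u\cdot\nabla\phi_R .
\end{equation}
The point is to choose $\phi_R$ radial, so that $\nabla\phi_R=\phi_R'(|x|)\,x/|x|$. Each flux term then contains exactly one radial factor, either $u_\rho$ or $b_\rho$, supported on the annulus $A_R=\{R<|x|<2R\}$ with $|\nabla\phi_R|\lesssim R^{-1}$. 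We organize the cross term as $(u\cdot b)\,b_\rho$ and keep $|b|^2u_\rho$ as it stands; this matches the roles of $r_1$ in (R2) and $r_2$ in (R3).

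Next, each term is estimated by Hölder on $A_R$, using $u\in L^{6/(3-\alpha)}$, $b\in L^{6/(3-\beta)}$ and $\Pi\in L^{3/(3-\alpha)}$ (from the Calderón--Zygmund representation $\Pi=R_iR_j(u_iu_j-b_ib_j)$, with a mixed exponent coming from $b\otimes b$). The leftover volume factor is $|A_R|^{\theta}\sim R^{3\theta}$.
\begin{itemize}
\item[(R1)] The term $(|u|^2+|\Pi|)\,|u_\rho|\,R^{-1}$ is bounded by $R^{-1+3(1-\frac{3-\alpha}{3}-\frac1{r_1})}=R^{\alpha-1-3/r_1}$. This is $\le R^0$, with the tail factor $\|u\|_{L^{6/(3-\alpha)}(A_R)}\to0$, exactly when $r_1\le 3/(\alpha-1)$.
\item[(R2)] The term $|b|^2|u_\rho|R^{-1}$ gives the exponent $\beta-1-3/r_1$, which yields $r_1\le3/(\beta-1)$.
\item[(R3)] The term $|u||b||b_\rho|R^{-1}$ gives $\frac{\alpha+\beta}{2}-1-\frac3{r_2}$, which yields $r_2\le 6/(\alpha+\beta-2)$.
\end{itemize}
In the parameter regions where the theorem of type \ref{thm1} already handles a term (e.g.\ $1\le\alpha\le5/3$ for the $u$-flux, $3\le\alpha+2\beta\le5$ for the mixed fluxes), we reuse that estimate with no assumption. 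That is why (R1)--(R3) are only imposed outside those ranges.

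In the low-dissipation cases ($\alpha<1$ or $\alpha+2\beta<3$) the sign of the exponent flips: the volume factor is too small rather than too large. There we instead put the radial component in a space of high integrability. For instance, for the $u$-flux we use $\|u_\rho\|_{L^{r_1}(A_R)}R^{-1}R^{3(\ldots)}$ and interpolate $u$ between $L^{6/(3-\alpha)}$ and $L^{r_1}$-type control. The power of $R$ is then nonpositive precisely when $r_1\ge3/\alpha$, respectively $r_1\ge3/\beta$ and $r_2\ge6/(\alpha+\beta)$. If the exponent vanishes exactly at an endpoint, the tail of a finite norm over $A_R$ still gives $o(1)$. Letting $R\to\infty$ yields $\nabla U^*=\nabla B^*=0$, so $u,b$ are constant and hence zero by the decay at infinity.

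The main obstacle is the pressure in the regimes $\alpha<1$ or $\alpha>5/3$. The bound $\Pi\in L^{3/(3-\alpha)}$ mixes $u\otimes u$ with $b\otimes b$, which have different integrabilities, so Hölder does not balance as cleanly as above. I would split $\Pi=\Pi_u+\Pi_b$, with $\Pi_u=R_iR_j(u_iu_j)$ and $\Pi_b=-R_iR_j(b_ib_j)$. Then $\Pi_u u_\rho$ is treated under (R1) and $\Pi_b u_\rho$ like $|b|^2u_\rho$ under (R2). To get decay of $\Pi_u$ and $\Pi_b$ on annuli, I would localize with the standard near/far decomposition of the Riesz transforms. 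Checking that these exponents are compatible is where I expect the real work, and it explains the excluded incompatible region.
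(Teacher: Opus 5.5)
There is a genuine gap in your H\"older bookkeeping, and that bookkeeping is where the content of the theorem lies. Your overall scheme is the paper's: Caffarelli--Silvestre extension, a radial cut-off so that each flux term carries exactly one factor $u_\rho$ or $b_\rho$, H\"older on the annulus, and the splitting $\Pi=\Pi_u-\Pi_b$.

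\textbf{The gap.} If you apply H\"older on $A_R$ with the radial factor in $L^{r}$ ($r=r_1$ or $r_2$), the leftover exponent is $\theta=\tau-\frac1r$. Here $\tau=\frac\alpha3$ for $|u|^2u_\rho$ and $\Pi_uu_\rho$, $\tau=\frac\beta3$ for $|b|^2u_\rho$ and $\Pi_bu_\rho$, and $\tau=\frac{\alpha+\beta}6$ for $|u||b||b_\rho|$. The resulting bound is $R^{-1+3\theta}$. You need $\theta\ge0$ for H\"older to apply at all, and $\theta\le\frac13$ for the power to be nonpositive. That is a two-sided condition on $r$, whereas (R1)--(R3) are one-sided.

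\emph{High-dissipation cases.} You only checked $\theta\le\frac13$. For $\alpha>\frac53$, (R1) allows $r_1=1$. Then $\frac{3-\alpha}{3}+1>1$, so your bound $R^{\alpha-1-3/r_1}$ is not available. Direct H\"older only gives $\frac3\alpha\le r_1\le\frac3{\alpha-1}$, which is exactly the older condition the theorem improves (see the remark on the case $b\equiv0$).

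\emph{Low-dissipation cases.} Here you have the roles reversed. For $\alpha<1$ the power $\alpha-1-\frac3{r_1}$ is always negative, so $r_1\ge\frac3\alpha$ is the condition $\theta\ge0$, not a sign condition on the power. Worse, the upper requirement $\theta\le\frac13$ can fail in this regime:
\begin{itemize}
\item For $\alpha=\frac12$, $\beta=\frac65$ (so $\alpha+2\beta<3$), (R1)--(R2) allow $r_1=\infty$. Your bound for $\int_{A_R}|b|^2|u_\rho||\nabla\phi_R|$ is then of size $R^{\beta-1}\to\infty$.
\item For $\alpha=1.6$, $\beta=0.5$, (R3) allows $r_2=\infty$, and $|u||b||b_\rho|$ gives $R^{(\alpha+\beta)/2-1}\to\infty$.
\end{itemize}

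\textbf{The missing idea.} Interpolate the radial component with its own Sobolev integrability. Since $|u_\rho|\le|u|$ and $|b_\rho|\le|b|$, you have $u_\rho\in L^{r_1}\cap L^{6/(3-\alpha)}$ and $b_\rho\in L^{r_2}\cap L^{6/(3-\beta)}$. Hence $u_\rho\in L^\sigma$ for every $\frac1\sigma$ between $\frac1{r_1}$ and $\frac{3-\alpha}6$, and similarly for $b_\rho$. Conditions (R1)--(R3) say exactly that this segment meets the window $[\tau-\frac13,\tau]$; for instance $\frac{3-\alpha}6>\frac\alpha3\iff\alpha<1$ and $\frac{3-\alpha}6<\frac{\alpha-1}3\iff\alpha>\frac53$. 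Take $\frac1\sigma$ to be the projection of $\frac{3-\alpha}6$ (resp.\ $\frac{3-\beta}6$) onto the window. This gives $\theta\in[0,\frac13]$ with $\sigma<\infty$, so the tail $\|u_\rho\|_{L^\sigma(A_R)}\to0$ supplies decay even when $\theta=\frac13$. This is Lemma~\ref{lemma.exponent}. What you interpolate must be $u_\rho$ (resp.\ $b_\rho$), not $u$: the tangential components are only known to lie in $L^{6/(3-\alpha)}$.

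\textbf{Consequences for the rest of your sketch.}
\begin{itemize}
\item The pressure is not an obstacle. The global Calder\'on--Zygmund bounds $\Pi_u\in L^{3/(3-\alpha)}$ and $\Pi_b\in L^{3/(3-\beta)}$ put $\Pi_uu_\rho$ and $\Pi_bu_\rho$ on exactly the same footing as $|u|^2u_\rho$ and $|b|^2u_\rho$. Decay comes from the tail of $u_\rho$, so no near/far decomposition of the Riesz transforms is needed.
\item The excluded region $\mathcal G$ is not a pressure phenomenon. There (R1) forces $r_1\le\frac3{\alpha-1}$ while (R2) forces $r_1\ge\frac3\beta>\frac3{\alpha-1}$, so the hypotheses are empty.
\item For the linear terms, scale invariance alone only gives boundedness. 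You must pair the tail of the weighted Dirichlet energy on the support of $\nabla\phi_R$ with the scale-invariant bound on $R^{-2}\int y^{1-\alpha}|U^*|^2$, or argue via Lemma~\ref{lemma.with.E} as the paper does.
\end{itemize}
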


Some remarks are in order.
\begin{remark}
We stress that Theorem \ref{thm.radial} requires additional
integrability assumptions only on the radial components of the
velocity and magnetic fields, namely $u_\rho$ and $b_\rho$,
without imposing any further integrability conditions on their
tangential components of $u$ and of $b$. 
In particular, 
the criterion depends only
on these two components, while the remaining components are
constrained only by the Sobolev regularity assumed in the statement of the theorem.
\end{remark}
\begin{remark}
For comparison, Zeng's Liouville theorem for the stationary fractional MHD system in \cite{zeng}  is based on the assumption that
\[
(u_j,b_j)\in L^{\vec p_j}(\mathbb R^3)
\times L^{\vec q_j}(\mathbb R^3),
\qquad j=1,2,3,
\]
where $\vec p_j=(p_{j,1},p_{j,2},p_{j,3})$ and
$\vec q_j=(q_{j,1},q_{j,2},q_{j,3})$, with
$p_{j,l},q_{j,l}\in[3,\infty)$ satisfying
\[
\sum_{l=1}^3\frac{1}{p_{j,l}}\geq\frac{2}{3},
\qquad
\sum_{l=1}^3\frac{1}{q_{j,l}}\geq\frac{2}{3},
\qquad j=1,2,3.
\]
Thus, Zeng's result imposes anisotropic integrability conditions on each Cartesian component of both fields, combining different integrability requirements along the coordinate directions to ensure the triviality of the solution.
By contrast, Theorem \ref{thm.radial} assumes only
\[
u_\rho\in L^{r_1}(\mathbb R^3),
\qquad
b_\rho\in L^{r_2}(\mathbb R^3),
\]
for exponents $r_1,r_2\in[1,\infty]$ satisfying
\emph{(R1)}--\emph{(R3)}. These restrictions depend on the
fractional orders $\alpha$ and $\beta$ and allow us to formulate
the Liouville criterion entirely in terms of the radial
components, rather than imposing additional integrability
conditions on each Cartesian component of the two fields.
\end{remark}
\begin{remark}\label{rem:cases}
The conditions (R1)--(R3) are active only in some parts of $(0,2)^2$. Thus, writing them, one finds the following list, where the Roman numerals refer to Figure \ref{fig:regions}.
\begin{center}
\renewcommand{\arraystretch}{1.35}
\begin{tabular}{@{}llll@{}}
 & region & condition on $u_\rho$ & condition on $b_\rho$\\
\hline
$\mathcal A$ & $1\le\alpha\le\frac53,\ 3\le\alpha+2\beta\le5$ & none & none\\
I & $\alpha<1,\ \alpha+2\beta<3$ & $r_1\ge\frac{3}{\min\{\alpha,\beta\}}$ & $r_2\ge\frac{6}{\alpha+\beta}$\\
II & $\alpha<1,\ \alpha+2\beta\ge3$ & $r_1\ge\frac{3}{\alpha}$ & none\\
III & $1\le\alpha\le\frac53,\ \alpha+2\beta<3$ & $r_1\ge\frac{3}{\beta}$ & $r_2\ge\frac{6}{\alpha+\beta}$\\
IV & $1\le\alpha\le\frac53,\ \alpha+2\beta>5$ & $r_1\le\frac{3}{\beta-1}$ & $r_2\le\frac{6}{\alpha+\beta-2}$\\
V & $\alpha>\frac53,\ 3\le\alpha+2\beta\le5$ & $r_1\le\frac{3}{\alpha-1}$ & none\\
VI & $\alpha>\frac53,\ \alpha+2\beta>5$ & $r_1\le\frac{3}{\max\{\alpha,\beta\}-1}$ & $r_2\le\frac{6}{\alpha+\beta-2}$\\
$\mathcal G$ & $\alpha>\frac53,\ \alpha+2\beta<3$ & incompatible requirements & ---
\end{tabular}
\end{center}
In particular, on $\mathcal A$, Theorem \ref{thm.radial} contains no radial hypothesis and reduces to Theorem \ref{thm1}, whereas on each of the regions I--VI it provides a result with a one-sided condition.
\begin{figure}[htbp]
\centering
\begin{tikzpicture}[scale=3]
\fill[blue!15] (1,1)--({5/3},{2/3})--({5/3},{5/3})--(1,2)--cycle;
\fill[gray!35] ({5/3},0)--(2,0)--(2,0.5)--({5/3},{2/3})--cycle;
\draw (0,0) rectangle (2,2);
\draw[dashed] (1,0)--(1,2);
\draw[dashed] ({5/3},0)--({5/3},2);
\draw[dashed] (0,1.5)--(2,0.5);
\draw[dashed] (1,2)--({5/3},{5/3});
\draw[dashed] ({5/3},{5/3})--(2,1.5);
\draw[->] (0,0)--(2.2,0) node[right]{$\alpha$};
\draw[->] (0,0)--(0,2.2) node[above]{$\beta$};
\node[font=\scriptsize] at (1,-0.1) {$1$};
\node[font=\scriptsize] at ({5/3},-0.1) {$\frac53$};
\node[font=\scriptsize] at (2,-0.1) {$2$};
\node[font=\scriptsize] at (-0.08,2) {$2$};
\node[font=\scriptsize] at (-0.12,1.5) {$\frac32$};
\node at (1.34,1.24) {$\mathcal A$};
\node at (0.4,0.6) {I};
\node at (0.4,1.6) {II};
\node at (1.32,0.4) {III};
\node at (1.48,1.88) {IV};
\node at (1.86,1.1) {V};
\node at (1.9,1.8) {VI};
\node at (1.86,0.2) {$\mathcal G$};
\end{tikzpicture}
\caption{The region $\mathcal A$ (shaded in blue), where no radial hypothesis is needed, and the regions I--VI and $\mathcal G$ of Remark \ref{rem:cases} in the square $(0,2)^2$. The dashed lines are $\alpha=1$, $\alpha=\frac53$, $\alpha+2\beta=3$ (from $(0,\frac32)$ to $(2,\frac12)$) and $\alpha+2\beta=5$.}
\label{fig:regions}
\end{figure}
\end{remark}
\begin{remark}
\emph{The region $\mathcal G$.} For $\alpha>\frac53$ and $\alpha+2\beta<3$ (so that $\beta<\frac23$) the requirements coming from the terms $|u|^2u_\rho$ and $|b|^2u_\rho$ push $r_1$ in opposite directions. The former asks for $r_1\le\frac{3}{\alpha-1}$ and the latter for $r_1\ge\frac{3}{\beta}$, and $\frac{3}{\beta}>\frac{3}{\alpha-1}$ because $\alpha-\beta>1$ there.  
\end{remark}
\begin{remark}\emph{The case $b\equiv0$.} If $b\equiv0$, the second equation in \eqref{SMHD} is trivially satisfied and the first one is the stationary fractional Navier--Stokes system with dissipation $(-\Delta)^{\alpha/2}$. Only the terms $|u|^2u_\rho$ and $\Pi_uu_\rho$ appear in the flux, so only (R1) is used. It requires $r_1\ge\frac{3}{\alpha}$ if $\alpha<1$, nothing if $1\le\alpha\le\frac53$, and $r_1\le\frac{3}{\alpha-1}$ if $\frac53<\alpha<2$. For $\frac53<\alpha<2$, the range treated in \cite{EVH2026}, this contains the condition $\frac{3}{\alpha}\le r_1\le\frac{3}{\alpha-1}$ of that paper.
\end{remark}

The paper is organized as follows. In Section~\ref{sect:prel}, we recall the Caffarelli--Silvestre extension and its main properties. Sections~\ref{proofs} 
and~\ref{sect:radial} contain the proofs of Theorems~\ref{thm1} and~\ref{thm.radial}, respectively.\\

Given $R>1$, in what follows, and throughout this paper, we use the notation
\[
\mathcal{C}_R := \{ x \in \mathbb{R}^3 \ : \ R < |x| < 2R \}.
\]
We let $\psi\in \mathcal{C}^\infty_c(\mathbb{R}^3)$ be a smooth radial non-increasing cut-off
function such that $0\le \psi\le 1$, $\psi(x)=1$ for $|x|\le 1$, and
$\psi(x)=0$ for $|x|\ge 2$. Given $R>1$, set
$  \psi_R(x)=\psi\!\left(\frac{x}{R}\right)$.

We also consider a non-increasing function
$\chi\in \mathcal{C}^\infty([0,\infty))$ such that $\chi(y)=1$ for
$y\leq 1$ and $\chi(y)=0$ for $y\geq 2$. Given $R>1$, we define
$\chi_R(y) = \chi\!\left(\frac{y}{R}\right)$. Note that $|\nabla\psi_R|\le CR^{-1}$, $|\Delta\psi_R|\le CR^{-2}$, $|\chi_R'|\le CR^{-1}$ and $|\chi_R''|\le CR^{-2}$, for some absolute constant $C>0$.

\section{Preliminaries}\label{sect:prel}

Since the operators $(-\Delta)^{\alpha/2}$ and $(-\Delta)^{\beta/2}$ are nonlocal, some of the classical arguments used to prove Liouville type theorems for the stationary Navier--Stokes and MHD equations cannot be carried over directly. Instead, we make use of the Caffarelli--Silvestre extension, which allows one to represent a fractional Laplacian on $\mathbb{R}^{3}$ as the boundary trace of a local problem involving the operator $\bar{\Delta}$ in the half-space $\mathbb{R}^{4}_{+}$. In the following, we shall write $\bar{\nabla}$, $\overline{\operatorname{div}}$ and $\bar{\Delta}$ for the differential operators associated with the extended variables in $\mathbb{R}^{4}_{+}$. Since the two equations in \eqref{SMHD} involve different fractional Laplacians, we will need two extensions, one for $u$ and one for $b$, with different weights. For this reason we state the result for a generic exponent $\sigma$.

\begin{theorem}\cite{cs07}\label{thm:CS}
Given $\sigma \in (0,2)$,
consider $v \in \dot{H}^{\frac \sigma 2}(\mathbb{R}^3)$, $\mu = 1 - \sigma$,
and the weighted homogeneous energy space
$\dot H^1(\mathbb R^4_+,y^\mu)$, obtained as the completion of
$C_c^\infty(\overline{\mathbb{R}^4_+})$ with respect to the seminorm
\[
\|w\|_{\dot H^1(\mathbb R^4_+,y^\mu)}
:=
\left(
\int_{\mathbb R^4_+}
y^\mu |\bar\nabla w|^2\, d  x\, d  y
\right)^{1/2}.
\]
 Then $v$ admits a unique extension $v^* \in \dot H^1(\mathbb{R}^4_+, y^\mu)$ satisfying
\[
\bar{\Delta}_\mu v^*(x, y) := \bar{\Delta} v^* + \frac{\mu}{y} \partial_y v^* = \frac{1}{y^\mu} \overline{\operatorname{div}}(y^\mu \bar{\nabla} v^*) = 0  ,
\]
and
\[
v^*(x, 0) = v(x).
\]
In addition, there exists $C_\sigma>0$, depending only on $\sigma$,  such that:
\begin{enumerate}
    \item the operator $(-\Delta)^{\frac \sigma 2}  v$ is given by
    \[
    (-\Delta)^{\frac \sigma 2}  v(x) = - \lim_{y \to 0} C_\sigma ( y^\mu \partial_y v^*(x, y)),
    \]
    \item and, the energy equality
    \[
    \int_{\mathbb{R}^3} |(-\Delta)^{\frac \sigma 4}  v|^2 dx  = C_\sigma \int_{\mathbb{R}^4_+} y^\mu |\bar{\nabla} v^*|^2 dx dy.
    \]
\end{enumerate}
Moreover, $v^*(\cdot,y)=P^{(\sigma)}_y*v$, where $P_y^{(\sigma)}(x)=c_{3,\sigma}\,y^{\sigma}(|x|^2+y^2)^{-\frac{3+\sigma}{2}}$ is the fractional Poisson kernel and $c_{3,\sigma}>0$ is chosen so that $\|P_y^{(\sigma)}\|_{L^1(\mathbb R^3)}=1$.
\end{theorem}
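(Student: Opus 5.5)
The plan is to work on the Fourier side in the $x$-variable, which reduces the degenerate elliptic problem $\overline{\operatorname{div}}(y^\mu\bar\nabla v^*)=0$ to an ODE in $y$ for each frequency $\xi$. Writing $\widehat{v^*}(\xi,y)$ for the partial Fourier transform, the equation becomes
\[
\partial_y^2\widehat{v^*}+\frac{\mu}{y}\partial_y\widehat{v^*}-|\xi|^2\widehat{v^*}=0,\qquad \widehat{v^*}(\xi,0)=\hat v(\xi).
\]
By scaling, $\widehat{v^*}(\xi,y)=\hat v(\xi)\,\phi(|\xi|y)$, where $\phi$ solves $\phi''+\frac{\mu}{t}\phi'-\phi=0$ with $\phi(0)=1$ and $\phi$ bounded as $t\to\infty$. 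Since $\mu=1-\sigma\in(-1,1)$, this is a modified Bessel equation. The substitution $\phi(t)=t^{\sigma/2}K(t)$ turns it into the modified Bessel equation of order $\sigma/2$, and the unique decaying solution is $\phi(t)=c_\sigma t^{\sigma/2}K_{\sigma/2}(t)$, normalized so that $\phi(0)=1$. From the small-$t$ asymptotics of $K_{\sigma/2}$ one reads off
\[
\phi(t)=1-\kappa_\sigma t^{\sigma}+o(t^\sigma),\qquad t^{\mu}\phi'(t)\to-\sigma\kappa_\sigma\quad(t\to0),
\]
with $\kappa_\sigma>0$. Hence $-y^\mu\partial_y\widehat{v^*}(\xi,y)\to\sigma\kappa_\sigma|\xi|^{\sigma}\hat v(\xi)$, which gives item (1) with $C_\sigma=(\sigma\kappa_\sigma)^{-1}$.

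For item (2) I will compute the weighted energy via Plancherel:
\[
\int_{\mathbb R^4_+}y^\mu|\bar\nabla v^*|^2=\int_{\mathbb R^3}|\hat v(\xi)|^2\int_0^\infty y^\mu|\xi|^2\bigl(\phi(|\xi|y)^2+\phi'(|\xi|y)^2\bigr)\,dy\,d\xi .
\]
The substitution $t=|\xi|y$ turns the inner integral into $|\xi|^{1-\mu}\int_0^\infty t^\mu(\phi^2+\phi'^2)\,dt=|\xi|^\sigma I_\sigma$. To identify the constant, I will integrate by parts using the ODE, $(t^\mu\phi')'=t^\mu\phi$. This yields $I_\sigma=[t^\mu\phi\phi']_0^\infty=\sigma\kappa_\sigma$, which is consistent with the constant found in item (1). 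Since $|\xi|^\sigma|\hat v|^2$ is integrable precisely because $v\in\dot H^{\sigma/2}$, this also shows $v^*\in\dot H^1(\mathbb R^4_+,y^\mu)$.

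Uniqueness is the step I expect to need the most care. The difficulty is that the ODE argument only excludes the growing Bessel branch under a priori frequency-wise control, so I would instead use a variational argument. The extension $v^*$ minimizes the weighted Dirichlet energy among functions in $\dot H^1(\mathbb R^4_+,y^\mu)$ with trace $v$. If $w$ is a second solution, then $w-v^*$ has zero trace and satisfies the weak equation. Testing the equation against $w-v^*$ itself, which is justified by density of $C_c^\infty$ in the completed space together with the trace inequality for $A_2$-weights $y^\mu$, gives $\int y^\mu|\bar\nabla(w-v^*)|^2=0$. Thus $w-v^*$ is constant, and that constant is zero because of the trace condition within the homogeneous space.

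Finally, the Poisson kernel formula follows by inverting $\hat v(\xi)\phi(|\xi|y)$. Concretely, I will check directly that $P_y^{(\sigma)}(x)=c\,y^\sigma(|x|^2+y^2)^{-(3+\sigma)/2}$ satisfies $\bar\Delta_\mu P=0$, which is a routine radial computation in $(x,y)$. I will also check that $P_y^{(\sigma)}$ is an approximate identity, by homogeneity and the choice of $c_{3,\sigma}$ making its $L^1$ norm equal to one. Then $P^{(\sigma)}_y*v$ solves the same boundary problem and has finite energy, so by uniqueness it coincides with $v^*$. Equivalently, one can verify that $\widehat{P_y^{(\sigma)}}(\xi)=\phi(|\xi|y)$ using the standard Fourier transform of $(1+|x|^2)^{-(3+\sigma)/2}$ in terms of $K_{\sigma/2}$.
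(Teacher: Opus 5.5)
The paper gives no proof of this statement and simply quotes it from Caffarelli--Silvestre \cite{cs07}; your argument is essentially the standard Fourier-side proof, as in \cite{cs07}, and it is correct: the partial Fourier transform in $x$, the reduction to $\phi''+\frac{\mu}{t}\phi'-\phi=0$ with decaying solution proportional to $t^{\sigma/2}K_{\sigma/2}(t)$, the constant $C_\sigma=(\sigma\kappa_\sigma)^{-1}$ read off from $\lim_{t\to0}t^\mu\phi'(t)$, the energy identity via $(t^\mu\phi')'=t^\mu\phi$, and the Poisson kernel through $\widehat{P^{(\sigma)}_y}(\xi)=\phi(|\xi|y)$ all check out. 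The one step to tighten is uniqueness, where for $z=w-v^*$ you test the equation with $z$ itself. That needs zero-trace elements of $\dot H^1(\mathbb R^4_+,y^\mu)$ to be energy limits of functions vanishing near $\{y=0\}$, which comes from a one-dimensional weighted Hardy inequality (valid since $\mu<1$) rather than from the trace inequality alone. In fact, the frequency-wise ODE argument you set aside also works: by Fubini, finite energy gives $\int_0^\infty y^\mu\bigl(|\xi|^2|\hat z|^2+|\partial_y\hat z|^2\bigr)\,dy<\infty$ for a.e.\ $\xi$, which rules out the exponentially growing Bessel branch, and then the zero trace forces $\hat z\equiv0$.
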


In what follows we write
\[
\mu:=1-\alpha, \qquad \nu:=1-\beta, \qquad p:=\frac{6}{3-\alpha}, \qquad q:=\frac{6}{3-\beta},
\]
so that, by the Sobolev embedding, $u\in L^{p}(\R^3)$ and $b\in L^{q}(\R^3)$. We denote by $u^*$ and $b^*$ the Caffarelli--Silvestre extensions of $u$ and $b$, associated with the weights $y^{\mu}$ and $y^{\nu}$ respectively, and by $C_\alpha$, $C_\beta$ the corresponding constants given by Theorem \ref{thm:CS}. Note that $\mu,\nu>-1$ because $\alpha,\beta<2$.

\section{Proof of Theorem \ref{thm1}}\label{proofs}

We begin with two lemmas that will be used in the proof of Theorem \ref{thm1}. 
The first is stated for a generic exponent, so that it can be applied both to $u$ and to $b$. 
Its proof is analogous to that of \cite{EVH2026}; we include it here for completeness. 
The proof of the second is postponed to the end of this section.

\begin{lemma}\label{lemma.with.E}
Let $\sigma\in(0,2)$, $v\in \dot H^{\sigma/2}(\mathbb R^3)$ and let $v^*$ be its Caffarelli--Silvestre extension. For every $R>1$ let $E_R\subseteq B_{2R}$ be a measurable set. Then
\[
\frac1{R^2}\int_0^{2R}\int_{E_R} y^{1-\sigma}|v^*|^2 dxdy\xrightarrow[R\to\infty]{}0
.
\]
\end{lemma}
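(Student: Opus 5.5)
The idea is to control $v^*$ on each horizontal slice by the Lebesgue norm of $v$, and then use the smallness of $v$ outside a large ball. By the Sobolev embedding, $v\in L^{s}(\mathbb R^3)$ with $s=\frac{6}{3-\sigma}$. Since $v^*(\cdot,y)=P^{(\sigma)}_y*v$ and $\|P_y^{(\sigma)}\|_{L^1}=1$, Young's inequality gives $\|v^*(\cdot,y)\|_{L^s}\le\|v\|_{L^s}$ for every $y>0$.

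Applying H\"older on $E_R\subseteq B_{2R}$ with exponents $s/2$ and its conjugate gives
\[
\int_{E_R}|v^*(x,y)|^2dx\le |B_{2R}|^{1-\frac2s}\,\|v^*(\cdot,y)\|_{L^s(B_{2R})}^2 \le CR^{3(1-\frac2s)}\|v^*(\cdot,y)\|_{L^s(B_{2R})}^2 .
\]
Here $3(1-\frac2s)=3-(3-\sigma)=\sigma$. Integrating in $y$ against $y^{1-\sigma}$ over $(0,2R)$ produces a further factor $CR^{2-\sigma}$, which is finite since $\sigma<2$. The total power is therefore $R^{\sigma}R^{2-\sigma}=R^2$, exactly cancelling the prefactor $R^{-2}$. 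This yields the uniform bound
\[
\frac1{R^2}\int_0^{2R}\int_{E_R} y^{1-\sigma}|v^*|^2\,dx\,dy\le C\sup_{0<y<2R}\|v^*(\cdot,y)\|^2_{L^s(B_{2R})},
\]
so boundedness is automatic. The main obstacle is upgrading this to decay, since the scaling is critical.

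To get decay, fix $\varepsilon>0$ and split $v=v\mathbf 1_{B_M}+v\mathbf 1_{B_M^c}$, choosing $M$ so that $\|v\mathbf 1_{B_M^c}\|_{L^s}<\varepsilon$. By Young, the far part contributes at most $C\varepsilon^2$ to the bound above, uniformly in $R$. The near part $w=v\mathbf 1_{B_M}$ lies in $L^1\cap L^s$, and I would handle it using $\|P_y*w\|_{L^\infty}\le \|P_y\|_{L^\infty}\|w\|_{L^1}\le Cy^{-3}\|w\|_{L^1}$ for large $y$.

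For the near part I split the $y$-integral. On $y<\delta R$, keep the $L^s$ bound; the $y$-integral then gives $C(\delta R)^{2-\sigma}R^{\sigma}R^{-2}=C\delta^{2-\sigma}\|w\|_{L^s}^2$, which is small for small $\delta$. On $\delta R<y<2R$, use the $L^\infty$ bound: the contribution is at most $R^{-2}\cdot R^3\cdot R^{2-\sigma}\cdot(\delta R)^{-6}\|w\|_{L^1}^2=C_\delta R^{-3-\sigma}\to0$. Choosing $\varepsilon$ and $\delta$ small first and then letting $R\to\infty$ gives the claimed limit $0$.
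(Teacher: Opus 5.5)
Your proof is correct and takes essentially the same route as the paper. Both arguments use the Young/H\"older slice bound that exactly cancels the $R^{-2}$ prefactor, followed by a density argument showing that $P^{(\sigma)}_y*v$ becomes small for large $y$. The difference is only in execution: you split the $y$-integral at $\delta R$ and use the $L^1\to L^\infty$ bound $\|P^{(\sigma)}_y\|_{L^\infty}\le Cy^{-3}$ on the truncated part. The paper instead rescales $y=Rt$, proves $\|v^*(\cdot,y)\|_{L^{s}}\to0$ as $y\to\infty$ via $\|P^{(\sigma)}_y\|_{L^{s}}\sim y^{-(3+\sigma)/2}$, and concludes by dominated convergence with dominating function $t^{1-\sigma}\|v\|_{L^s}^2$.
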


\begin{lemma}\label{lemma I R}
Let $(\alpha,\beta)\in\mathcal A$, $u\in\dot H^{\alpha/2}(\R^3)$, $b\in\dot H^{\beta/2}(\R^3)$. Let $\Pi$ be the pressure defined in \eqref{eq:pressure} below.
 Then
\[
\lim_{R\to +\infty}
\int_{ C_R }\bigl(|\vu|^2+|b|^2\bigr)
| \vu |
| \nabla  \psi_R |
 dx
+
 \int_{ C_R }
 | \Pi  |
 | \vu  |
|  \nabla  \psi_R |
  dx
 = 0.
\]
\end{lemma}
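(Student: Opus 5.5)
The plan is to bound each integral in the statement directly on the annulus $\mathcal C_R$ by H\"older's inequality. The only inputs are the Sobolev embeddings $u\in L^{p}(\R^3)$ and $b\in L^{q}(\R^3)$, with $p=\frac{6}{3-\alpha}$ and $q=\frac{6}{3-\beta}$, together with $|\nabla\psi_R|\le CR^{-1}$ and $|\mathcal C_R|\le CR^3$. The Caffarelli--Silvestre extension is not needed for this lemma. The main structural point is that each H\"older estimate produces a power of $R$ whose exponent is $\le 0$ exactly when $(\alpha,\beta)\in\mathcal A$. In the borderline case of exponent $0$, I would use that the norms are taken over $\mathcal C_R\subset\{|x|>R\}$. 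These are tails of globally integrable functions, so they tend to $0$ as $R\to\infty$.

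\emph{The cubic velocity term.} Since $\alpha\ge1$ we have $p\ge3$. H\"older's inequality gives
\[
\frac1R\int_{\mathcal C_R}|u|^3\,dx\le \frac CR\,\|u\|_{L^p(\mathcal C_R)}^3\,R^{3(1-\frac3p)}=C\,R^{\frac{3\alpha-5}{2}}\|u\|_{L^p(|x|>R)}^3 .
\]
This tends to $0$ because $\alpha\le\frac53$ and $\|u\|_{L^p(|x|>R)}\to0$.

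\emph{The mixed term.} For $|b|^2|u|$ I would apply H\"older with $b$ in $L^q$ twice and $u$ in $L^p$. The exponent sum is $\frac2q+\frac1p=\frac{9-\alpha-2\beta}{6}$, and this is $\le1$ precisely when $\alpha+2\beta\ge3$. The leftover measure factor is $R^{3\cdot\frac{\alpha+2\beta-3}{6}}$, so
\[
\frac1R\int_{\mathcal C_R}|b|^2|u|\,dx\le C\,R^{\frac{\alpha+2\beta-5}{2}}\|b\|_{L^q(|x|>R)}^2\|u\|_{L^p(|x|>R)},
\]
which tends to $0$ since $\alpha+2\beta\le5$. This is where the condition $3\le\alpha+2\beta\le5$ enters; no lower bound on $\beta$ alone is needed.

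\emph{The pressure term.} I take the pressure from \eqref{eq:pressure} to be $\Pi=\Pi_u+\Pi_b$, with $\Pi_u=R_iR_j(u_iu_j)$ and $\Pi_b=-R_iR_j(b_ib_j)$ (possibly up to a trace term, which is harmless). Since $p/2>1$ and $q/2=\frac{3}{3-\beta}>1$, Calder\'on--Zygmund theory gives $\Pi_u\in L^{p/2}$ and $\Pi_b\in L^{q/2}$. Therefore $|\Pi_u||u|$ has the same H\"older exponents as $|u|^3$, and $|\Pi_b||u|$ has the same exponents as $|b|^2|u|$. Repeating the two estimates above with $\|\Pi_u\|_{L^{p/2}(|x|>R)}$ and $\|\Pi_b\|_{L^{q/2}(|x|>R)}$ in place of $\|u\|^2_{L^p}$ and $\|b\|^2_{L^q}$ gives the same powers of $R$, again multiplied by vanishing tails.

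The only delicate point is the endpoints $\alpha=\frac53$ and $\alpha+2\beta\in\{3,5\}$, where the power of $R$ is exactly $0$. There the argument rests entirely on the tail norms tending to zero, so it is essential that the pressure is the Riesz-transform representative, which lies globally in the stated Lebesgue spaces.
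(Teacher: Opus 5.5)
Your proposal is correct and follows essentially the paper's argument. You apply H\"older on $\mathcal C_R$ with exponents $(p,p,p)$ for $|u|^3$ and $(p,q,q)$ for $|b|^2|u|$, and use the Calder\'on--Zygmund bounds $\Pi_1\in L^{p/2}$, $\Pi_2\in L^{q/2}$ for the pressure, which produces the factors $R^{(3\alpha-5)/2}$ and $R^{(\alpha+2\beta-5)/2}$ multiplied by a vanishing tail. The only cosmetic difference is that you take tail norms of every factor, whereas the paper keeps global norms of $b$ and $\Pi$ and uses only the tail of $u$.
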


\begin{proof}[Proof of Theorem \ref{thm1}]

To begin, note that, since $u\in \dot H^{\alpha/2}(\mathbb{R}^3)$ and $b\in \dot H^{\beta/2}(\mathbb{R}^3)$, the Sobolev
embedding gives $u\in L^{p}(\mathbb{R}^3)$ and $b\in L^{q}(\mathbb{R}^3)$. Since $\alpha,\beta>0$ we have $p,q>2$, and therefore $u_iu_j\in L^{p/2}(\mathbb{R}^3)$ and $b_ib_j\in L^{q/2}(\mathbb{R}^3)$ with $1<\frac p2,\frac q2<+\infty$. Then, taking the divergence of the momentum equation and using
$\nabla\cdot u=\nabla\cdot b=0$, we obtain the elliptic equation
\begin{equation}\label{ellipeq}
-\Delta \Pi=\partial^i\partial^j(u_iu_j-b_ib_j)=:f .
\end{equation}
Now, since \eqref{ellipeq} determines $\Pi$ only up to an additive harmonic function, we fix the
pressure through the Calder\'on--Zygmund representation
\begin{equation}\label{eq:pressure}
\Pi:=\Pi_1+\Pi_2, \qquad \Pi_1:=R^iR^j(u_iu_j), \qquad \Pi_2:=-R^iR^j(b_ib_j),
\end{equation}
where $R^i$ denotes the $i$-th Riesz transform on $\mathbb{R}^3$. By the Calder\'on--Zygmund
theorem,
\begin{equation}\label{eq:CZ}
\|\Pi_1\|_{L^{\frac p2}(\mathbb{R}^3)}\le C\,\|u\|_{L^{p}(\mathbb{R}^3)}^2<+\infty,
\qquad
\|\Pi_2\|_{L^{\frac q2}(\mathbb{R}^3)}\le C\,\|b\|_{L^{q}(\mathbb{R}^3)}^2<+\infty .
\end{equation}
We claim that this is the only solution   of \eqref{ellipeq} in $L^{\frac p2}(\mathbb{R}^3)+L^{\frac q2}(\mathbb{R}^3)$. Indeed, if
$\tilde \Pi$ is another solution in this space, then $h:=\Pi-\tilde \Pi=h_1+h_2$ is harmonic on
$\mathbb{R}^3$, with $h_1\in L^{\frac p2}(\mathbb{R}^3)$ and $h_2\in L^{\frac q2}(\mathbb{R}^3)$. Thus, considering the mean value property and H\"older's
inequality, for every $x\in\mathbb{R}^3$ and $\rho>0$,
\[
|h(x)|\le \frac{1}{|B_\rho|}\int_{B_\rho(x)}(|h_1|+|h_2|)\,dy
\le |B_\rho|^{-\frac{2}{p}}\,\|h_1\|_{L^{\frac p2}(\mathbb{R}^3)}
+|B_\rho|^{-\frac{2}{q}}\,\|h_2\|_{L^{\frac q2}(\mathbb{R}^3)}\xrightarrow[\rho\to\infty]{}0 ,
\]
thus, we get $h\equiv 0$. Throughout the paper, $\Pi$ always denotes this normalized pressure.
On the other hand,  since  $u$ and $b$ are smooth, by using elliptic regularity theory on \eqref{ellipeq}, we conclude that the pressure term $\Pi$ is also smooth.

Then, testing the first equation of \eqref{SMHD} with $\psi_R u $ and the second one with $\psi_R b$,  using $\nabla\cdot u=\nabla\cdot b=0$ and integrating by parts, we get
\begin{multline}\label{eq2-energy}
\int_{\mathbb{R}^3} \psi_R u \cdot (-\Delta)^{\frac \alpha 2}  u \, dx
+
\int_{\mathbb{R}^3} \psi_R b \cdot (-\Delta)^{\frac \beta 2}  b \, dx
= 
\\
\int_{\mathbb{R}^3} \left(\frac {|u|^2} 2 +\frac {|b|^2} 2 + \Pi \right)u \cdot \nabla \psi_R \, dx
-\int_{\mathbb{R}^3} (u\cdot b)\,(b \cdot \nabla \psi_R) \, dx.
\end{multline}

We next treat the fractional dissipation terms on the left-hand side by means of the Caffarelli--Silvestre extension. For this purpose, we consider $\Phi(x,y):=\psi_R(x)\chi_R(y)$ and the identity
(see \cite[Eq. (3.2)]{yang22}), applied to each component of $u$ with $\sigma=\alpha$ and to each component of $b$ with $\sigma=\beta$,
\begin{align}
\int_{\mathbb{R}^3} \psi_R u \cdot (-\Delta)^{\frac \alpha 2}  u \, dx
& =
 C_\alpha \int_{\mathbb{R}^4_+} y^\mu |\overline\nabla u^*|^2 \Phi \, dx \, dy
 -
  C_\alpha \int_{\mathbb{R}^4_+} \frac{|u^*|^2}{2}
   \overline
 {\text{div}}
  \bigl( y^\mu \overline\nabla \Phi \bigr) \, dx \, dy
  ,\label{eq25-u}\\
\int_{\mathbb{R}^3} \psi_R b \cdot (-\Delta)^{\frac \beta 2}  b \, dx
& =
 C_\beta \int_{\mathbb{R}^4_+} y^\nu |\overline\nabla b^*|^2 \Phi \, dx \, dy
 -
  C_\beta \int_{\mathbb{R}^4_+} \frac{|b^*|^2}{2}
   \overline
 {\text{div}}
  \bigl( y^\nu \overline\nabla \Phi \bigr) \, dx \, dy
  .\label{eq25-b}
\end{align}
Then plugging \eqref{eq25-u} and \eqref{eq25-b} into \eqref{eq2-energy}, we write
\begin{multline}\label{eq3-energy}
 C_\alpha\int_{\mathbb{R}^4_+} y^\mu |\overline \nabla u^*|^2 \Phi \, dx \, dy
 +
 C_\beta\int_{\mathbb{R}^4_+} y^\nu |\overline \nabla b^*|^2 \Phi \, dx \, dy
\\=
  \frac{C_\alpha}{2} \int_{\mathbb{R}^4_+}  |u^*|^2   \overline
 {\text{div}} \bigl( y^\mu  \overline\nabla \Phi \bigr) \, dx \, dy
 +
  \frac{C_\beta}{2} \int_{\mathbb{R}^4_+}  |b^*|^2   \overline
 {\text{div}} \bigl( y^\nu  \overline\nabla \Phi \bigr) \, dx \, dy
\\
+
 \int_{\mathbb{R}^3} \left(\frac {|u|^2} 2 +\frac {|b|^2} 2 + \Pi \right)u \cdot \nabla \psi_R \, dx
-\int_{\mathbb{R}^3} (u\cdot b)\,(b \cdot \nabla \psi_R) \, dx.
\end{multline}

Now, considering the supports of $\chi_R,\ \psi_R$ and the facts that $\overline\nabla \Phi(x,y) = ( \chi_R \nabla\psi_R, \psi_R \chi'_R)$ and, for a generic weight $y^{\kappa}$, we can write 
\[ \overline
 {\text{div}} ( y^\kappa \overline\nabla \Phi ) = y^\kappa \chi_R\Delta \psi_R + \psi_R\bigl(\kappa y^{\kappa-1}\chi_R'+y^\kappa\chi_R''\bigr),\] 
 we notice that the first term is supported in $\mathcal C_R\times[0,2R]$, while the second one is supported in $B_{2R}\times[R,2R]$, where $y^{\kappa-1}\le R^{-1}y^{\kappa}$. We deduce the estimate
 
\[
\Bigl|\int_{\mathbb{R}^4_+}  |v^*|^2  \overline
 {\text{div}} \bigl( y^\kappa \overline\nabla \Phi(x,y)  \bigr) dx  dy\Bigr|
\leq
\frac C {R^2}
 \int_{0}^{2R}   \!  \!   \!
 \int_{ \mathcal C_R} y^\kappa |v^*|^2 dxdy
 +
 \frac C {R^2}
 \int_{R}^{2R} \!  \!  \!     \int_{ B_{2R}} y^\kappa |v^*|^2 dx dy ,
\]
where $C$ depends on $\kappa$ but not on $R$. Take $(v,\kappa)=(u,\mu)$ and $(v,\kappa)=(b,\nu)$. Then, applying Lemma \ref{lemma.with.E} (with $\sigma=\alpha$, respectively $\sigma=\beta$) with $E_R=\mathcal C_R$, we conclude that the first term on the right-hand side tends to zero as $R\to\infty$. On the other hand, since $y^\kappa|v^*|^2 \geq 0$, we can write
\[
 \int_{R}^{2R} \!  \!  \!     \int_{ B_{2R}} y^\kappa |v^*|^2 dx dy
\le
 \int_{0}^{2R} \!  \!  \!     \int_{ B_{2R}} y^\kappa |v^*|^2 dx dy,
 \]
 therefore, considering Lemma \ref{lemma.with.E} with $E_R=B_{2R}$ we obtain that the second term tends to zero also. Considering the same approach for $b^*$, we conclude
 \begin{equation}
 \lim_{R\to\infty}
 \int_{\mathbb{R}^4_+}  |u^*|^2 \overline{\text{div}} \bigl( y^\mu \overline\nabla \Phi(x,y)  \bigr) dx  dy
 =
 \lim_{R\to\infty}
 \int_{\mathbb{R}^4_+}  |b^*|^2 \overline{\text{div}} \bigl( y^\nu \overline\nabla \Phi(x,y)  \bigr) dx  dy
 =0.
 \end{equation}

Now, note that, since $\psi_R$ is radial and $\nabla \psi_R$ is supported in $\mathcal C_R$, we can write
\begin{multline}
\Bigl|\int_{\mathbb{R}^3} \left(\frac {|u|^2} 2 +\frac {|b|^2} 2 + \Pi \right)u \cdot \nabla \psi_R \, dx
-\int_{\mathbb{R}^3} (u\cdot b)\,(b \cdot \nabla \psi_R) \, dx\Bigr|
\\ \leq
C
\int_{ \mathcal C_R }
\bigl(|\vu|^2+|b|^2\bigr)
| \vu |
| \nabla   \psi_R|
 dx
+
C
 \int_{ \mathcal C_R }
 | \Pi  |
 | \vu |
|  \nabla \psi_R |
  dx
  ,
\end{multline}
where we used $|(u\cdot b)(b\cdot\nabla\psi_R)|\le|u||b|^2|\nabla\psi_R|$. By Lemma \ref{lemma I R}, the right-hand side tends to zero as $R\to\infty$.

Thus, provided with these results,  we take the limit as $R\to \infty$ in \eqref{eq3-energy}. Since $\psi$ is radial and non-increasing, and $\chi$ is non-increasing, $\Phi=\Phi_R$ increases pointwise to $1$ as $R\to\infty$, and the monotone convergence theorem applied to the left-hand side of \eqref{eq3-energy} yields
\[
C_\alpha\int_{\mathbb{R}^4_+} y^\mu |\overline{\nabla} u^*|^2 \, dx \, dy
+
C_\beta\int_{\mathbb{R}^4_+} y^\nu |\overline{\nabla} b^*|^2 \, dx \, dy = 0.
\]
Thus, since $C_\alpha,C_\beta>0$, the both integrals above vanish. 
Hence, we conclude that  $u^*$ and $b^*$ are constant in $\mathbb R^4_+$, and so are their traces $u(x)=u^*(x,0)$ and $b(x)=b^*(x,0)$. 
Then, considering that $u\in L^{p}(\mathbb R^3)$ and $b\in L^{q}(\mathbb R^3)$, and the fact that the only constant function belonging to
 $ L^{r}(\mathbb R^3) $, $r<\infty$, is the zero function, we conclude that  $u\equiv 0$ and $b\equiv 0$. This finishes the proof.
\end{proof}

\begin{proof}[Proof of Lemma \ref{lemma.with.E}]
In the following we denote by $q_\sigma$ the Sobolev exponent, {\it i.e.} $q_\sigma:=\frac{6}{3-\sigma}>2$, and we write $\kappa:=1-\sigma\in(-1,1)$.

By Theorem \ref{thm:CS}, the Caffarelli--Silvestre extension can be written as
\[
v^*(x,y)=P_y^{(\sigma)}*v(x).
\]
By Young's inequality,
\begin{equation}\label{eq3.6}
\|v^*(\cdot,y)\|_{L^{q_\sigma}(\mathbb R^3)}
\leq
\|v\|_{L^{q_\sigma}(\mathbb R^3)}.
\end{equation}
Now, considering H\"older inequality with  $\frac 1 {q_\sigma/2} + \frac 1{q_\sigma/(q_\sigma-2)}=1$, we get
\begin{align}
\int_{E_R}|v^*(x,y)|^2\,dx
& =
\int_{\mathbb{R}^3} \big(|v^*(x,y)|^2\mathbf1_{E_R}(x)\big)\,dx
 \\
 &
 \le \||v^*(\cdot,y)|^2 \|_{L^{q_\sigma/2}}\,\|\mathbf1_{E_R}\|_{L^{q_\sigma/(q_\sigma-2)}}
 \\
 & =\norm{v^*(\cdot,y)}_{L^{q_\sigma}}^2\,|E_R|^{1- \frac 2 {q_\sigma} }.
\end{align}
Then, since $|E_R|^{1-2/q_\sigma}\le|B_{2R}|^{1-2/q_\sigma}=CR^{3(1-2/q_\sigma)}= CR^{\sigma}$, we can write  for all $y>0$
\[
\int_{E_R}|v^*(x,y)|^2\,dx
\le C R^\sigma \norm{v^*(\cdot,y)}_{L^{q_\sigma}}^2
.
\]
With this information at hand, we can write
\[
\frac1{R^2}\int_0^{2R}\int_{E_R} y^\kappa|v^*|^2\,dxdy
\le
 CR^{\sigma-2}\int_0^{2R}y^\kappa\norm{v^*(\cdot,y)}_{q_\sigma}^2dy .
\]
Then, making the change of variables $y=Rt$, $dy=R\,dt$ and using that $\kappa=1-\sigma$, we get
\begin{align}\label{eq.for.dominated.convergce}
\frac1{R^2}\int_0^{2R}\int_{E_R} y^\kappa|v^*|^2\,dxdy
& \le
C R^{\sigma-2+\kappa+1}\int_0^2t^\kappa\norm{v^*(\cdot,Rt)}_{L^{q_\sigma}}^2 dt
\\ &=
 C \int_0^2t^\kappa\norm{v^*(\cdot,Rt)}_{L^{q_\sigma}}^2 dt
 .
\end{align}
Let $\epsilon>0$. Now considering that $C_c^\infty(\R^3)$ is dense in $L^{q_\sigma}(\R^3)$,  we can choose $v_\epsilon\in C_c^\infty(\R^3)\subset L^1\cap L^{q_\sigma}(\R^3)$ such that $\norm{v-v_\epsilon}_{L^{q_\sigma}}<\varepsilon/2$.
Then, considering Young inequality, we can write
\[
\norm{P_y^{(\sigma)}*v_\epsilon}_{L^{q_\sigma}}\le \norm{P_y^{(\sigma)}}_{L^{q_\sigma} }\,\norm{v_\epsilon}_{L^1}.
\]
Now, note that,  by rescaling we have $P_y^{(\sigma)}(x)=y^{-3}P_1^{(\sigma)}(x/y)$, and then we obtain
\[
\norm{P_y^{(\sigma)}}_{L^{q_\sigma}}=y^{3/q_\sigma-3}\norm{P_1^{(\sigma)}}_{L^{q_\sigma}}=y^{-(3+\sigma)/2}K_\sigma,
\qquad
 K_\sigma:=\norm{P_1^{(\sigma)}}_{L^{q_\sigma}(\R^3)}<+\infty
 .
\]
Hence, for fixed $\epsilon$, we obtain
\[
\norm{P_y^{(\sigma)}*v_\epsilon}_{L^{q_\sigma}}
\le
 K_\sigma
 y^{ -(3+\sigma)/2}\norm{v_\epsilon}_{L^1}\xrightarrow[y\to\infty]{}0
.
\]
With this in hand, we choose $y_0$ such that  $K_\sigma y_0^{-(3+\sigma)/2}\norm{v_\epsilon}_{L^1}<\varepsilon/2$. Then, for each  $y>y_0$,  we use Young's inequality to write
\[
\norm{v^*(\cdot,y)}_{L^{q_\sigma}}
\le
 \norm{P_y^{(\sigma)}*(v-v_\epsilon)}_{L^{q_\sigma}}
 +
 \norm{P_y^{(\sigma)}*v_\epsilon}_{L^{q_\sigma}}
\le
\norm{v-v_\epsilon}_{L^{q_\sigma}}+K_\sigma y^{-(3+\sigma)/2}\norm{v_\epsilon}_{1}<\varepsilon.
\]
This implies that
\begin{equation}\label{eq.infty.y}
\norm{v^*(\cdot,y)}_{L^{q_\sigma}(\R^3)}\xrightarrow[y\to\infty]{}0.
\end{equation}
On the other hand, considering \eqref{eq3.6} and the fact that $\kappa =1-\sigma>-1$ (since $\sigma<2$) we have
\[
t^\kappa \norm{v^*(\cdot,Rt)}_{L^{q_\sigma}}^2
\le
t^\kappa
\norm{v}_{L^{q_\sigma}}^2 \in L^1_t (0,2)
.
\]
Moreover, using \eqref{eq.infty.y}, for all $t\in (0,2)$ we have $\norm{v^*(\cdot,Rt)}_{L^{q_\sigma}}^2 \to 0$ as $R\to \infty$. Then, considering the Dominated Convergence Theorem and \eqref{eq.for.dominated.convergce}, we conclude the claim of the lemma.
\end{proof}

\begin{proof}[Proof of Lemma \ref{lemma I R}]
To begin, recall that $p=\frac{6}{3-\alpha}$ and $q=\frac{6}{3-\beta}$. Then, since
$u\in \dot H^{\frac \alpha 2}(\R^3)$ and $b\in \dot H^{\frac \beta 2}(\R^3)$, by the Sobolev embedding we have $u\in L^{p}(\mathbb R^3)$ and $b\in L^{q}(\mathbb R^3)$, and $3\le p\le \frac 92$ because $1\le\alpha\le\frac 53$.

We first record an elementary scaling estimate. Let $\rho_1,\rho_2,\rho_3\in[1,\infty]$ be such that
\[
\theta:=1-\frac1{\rho_1}-\frac1{\rho_2}-\frac1{\rho_3}\in[0,1).
\]
Then, by H\"older's inequality with exponents $\rho_1,\rho_2,\rho_3,\frac1\theta$, and using that $\|\nabla\psi_R\|_{L^\infty}\le CR^{-1}$ and $|\mathcal C_R|\le CR^3$, we have, for all $R>1$ that 
\begin{multline}\label{eq:scaling}
\int_{\mathcal C_R}|f_1||f_2||f_3||\nabla\psi_R|\,dx
\le
\|f_1\|_{L^{\rho_1}(\mathcal C_R)}\|f_2\|_{L^{\rho_2}(\mathcal C_R)}\|f_3\|_{L^{\rho_3}(\mathcal C_R)}
\|\nabla\psi_R\|_{L^{1/\theta}(\mathcal C_R)}
\\
\le
CR^{3\theta-1}\prod_{i=1}^3\|f_i\|_{L^{\rho_i}(\mathcal C_R)} ,
\end{multline}
where for $\theta=0$ the last norm is understood as an $L^\infty$ norm. In particular, if $3\theta-1\le0$ then $R^{3\theta-1}\le1$ for $R>1$.

Now, to continue, we consider the notation
\begin{equation}
\int_{ \mathcal C_R }\bigl(|\vu|^2+|b|^2\bigr)
|  \vu  |
|  \nabla  \psi_R |
 dx
+
 \int_{ \mathcal C_R }
 | \Pi  |
 | \vu  |
| \nabla \psi_R |
  dx
 =: 
  I_1(R) + I_2(R)+I_3(R)
  ,
\end{equation}
where $I_1(R)$ contains $|u|^3$, $I_2(R)$ contains $|b|^2|u|$, and $I_3(R)$ contains the pressure. With this in hand, we aim to prove
$$
\displaystyle \lim_{R\to +\infty}I_{1}(R)= \lim_{R\to +\infty} I_{2}(R)=\lim_{R\to +\infty} I_{3}(R)=0.
$$

\smallskip
\noindent\textit{The term $I_1$.} We apply \eqref{eq:scaling} with $(\rho_1,\rho_2,\rho_3)=(p,p,p)$, for which $\theta_1=1-\frac3p=\frac{\alpha-1}{2}\in[0,\frac13]$. Then $3\theta_1-1=\frac{3\alpha-5}{2}\le0$, since $\alpha\le\frac53$, and we obtain
\[
I_1(R)\le C\|u\|^3_{L^{p}(\mathcal C_R)} .
\]
Thus, considering that $\lim_{R\to\infty}\|u\|_{L^{p}(\mathcal C_R)}= 0$, we conclude $I_1(R)\to0$ as $R\to+\infty$.

\smallskip
\noindent\textit{The term $I_2$.} We apply \eqref{eq:scaling} with $(\rho_1,\rho_2,\rho_3)=(p,q,q)$, for which
\[
\theta_2=1-\frac1p-\frac2q=1-\frac{3-\alpha}{6}-\frac{3-\beta}{3}=\frac{\alpha+2\beta-3}{6}\ge0,
\]
and $3\theta_2-1=\frac{\alpha+2\beta-5}{2}\le0$, by the assumption $3\le\alpha+2\beta\le5$. Hence
\[
I_2(R)\le C\|u\|_{L^{p}(\mathcal C_R)}\|b\|^2_{L^{q}(\mathcal C_R)}\to0 \quad\text{as } R\to+\infty ,
\]
since $\|b\|_{L^q(\mathcal C_R)}\le \|b\|_{L^q(\R^3)}<+\infty$ and $\|u\|_{L^{p}(\mathcal C_R)}\to0$.

\smallskip
\noindent\textit{The term $I_3$.} We split $\Pi=\Pi_1+\Pi_2$ as in \eqref{eq:pressure}. For $\Pi_1$ we apply \eqref{eq:scaling} with $(\rho_1,\rho_2,\rho_3)=(\frac p2,p,\infty)$ and $f_3\equiv1$, which gives the same $\theta_1=1-\frac2p-\frac1p=\frac{\alpha-1}2$ as before. Note that $\|f_3\|_{L^\infty(\mathcal C_R)}=1$, so no volume factor appears,  the only power of $R$ is $R^{3\theta_1-1}\le1$, coming from $\|\nabla\psi_R\|_{L^{1/\theta_1}(\mathcal C_R)}$. Hence, by \eqref{eq:CZ}, we can write 
\[
\int_{\mathcal C_R}|\Pi_1||u||\nabla\psi_R|\,dx
\le
C R^{3\theta_1-1}\|\Pi_1\|_{L^{\frac p2}(\R^3)}\|u\|_{L^{p}(\mathcal C_R)}
\le
C\|u\|^2_{L^{p}(\R^3)}\|u\|_{L^{p}(\mathcal C_R)}\to0 .
\]
Now, for $\Pi_2$ we apply \eqref{eq:scaling} with $(\rho_1,\rho_2,\rho_3)=(\frac q2,p,\infty)$ and $f_3\equiv1$, so that $\theta=1-\frac2q-\frac1p=\theta_2$ and $R^{3\theta_2-1}\le1$ as before. Thus, by \eqref{eq:CZ}, we get 
\[
\int_{\mathcal C_R}|\Pi_2||u||\nabla\psi_R|\,dx
\le
C R^{3\theta_2-1}\|\Pi_2\|_{L^{\frac q2}(\R^3)}\|u\|_{L^{p}(\mathcal C_R)}
\le
C\|b\|^2_{L^{q}(\R^3)}\|u\|_{L^{p}(\mathcal C_R)}\to0 .
\]
Hence, we conclude that $I_3(R)\to0$ as $R\to+\infty$.
Then, by gathering together the limits for $I_{1}(R)$, $I_{2}(R)$ and $I_3(R)$, we deduce the desired limit.
\end{proof}

\section{Proof of Theorem \ref{thm.radial}}\label{sect:radial}

We begin with the lemma that will be used in the proof of Theorem \ref{thm.radial}, whose proof is presented in Subsection \ref{sub:radial}. The other tool we need, namely Lemma \ref{lemma.with.E}, was already stated and proved in Section \ref{proofs} for a generic exponent.

\begin{lemma}\label{lemma I R radial}
Assume the hypotheses of Theorem \ref{thm.radial}. Then
\begin{multline*}
\lim_{R\to +\infty}
\Big(\int_{ \mathcal C_R }\big(|u|^2+|b|^2\big)
| u_\rho |
| \partial_\rho\psi_R |
 dx
 \\
 +
 \int_{ \mathcal C_R }
 \big(| \Pi_u  |+| \Pi_b  |\big)
 | u_\rho |
| \partial_\rho\psi_R |
  dx
+
 \int_{ \mathcal C_R }
 | u|\,| b  |
 | b_\rho |
| \partial_\rho\psi_R |
  dx\Big)
 = 0.
\end{multline*}
\end{lemma}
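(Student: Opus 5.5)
The plan is to reuse the H\"older scaling estimate \eqref{eq:scaling}, but with one of the three factors replaced by $u_\rho\in L^{r_1}$ or $b_\rho\in L^{r_2}$ instead of its Sobolev norm. Here $\Pi_u:=\Pi_1$ and $\Pi_b:=\Pi_2$ are the Calder\'on--Zygmund pieces from \eqref{eq:pressure}. They satisfy \eqref{eq:CZ}, so $\Pi_u\in L^{p/2}$ and $\Pi_b\in L^{q/2}$. We split the flux into five terms:
\[
J_1=|u|^2|u_\rho|,\quad J_2=|b|^2|u_\rho|,\quad J_3=|\Pi_u||u_\rho|,\quad J_4=|\Pi_b||u_\rho|,\quad J_5=|u||b||b_\rho|,
\]
each integrated against $|\partial_\rho\psi_R|\le CR^{-1}\mathbf 1_{\mathcal C_R}$. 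Each term is bounded by $CR^{3\theta-1}$ times a product of local norms on $\mathcal C_R$. It tends to zero provided $3\theta-1\le 0$ and at least one factor is a tail norm of a function in a finite-exponent Lebesgue space. The case $r=\infty$ needs separate care.

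\textbf{Step 1: pairing each term with its exponents.} For $J_1$ and $J_3$ we use $(p,p,r_1)$, respectively $(p/2,r_1,\infty)$. Both give
\[
\theta=1-\tfrac2p-\tfrac1{r_1}=\tfrac{\alpha}{3}-\tfrac1{r_1}.
\]
The condition $3\theta-1\le0$ reads $r_1\le \frac{3}{\alpha-1}$, which is automatic when $\alpha\le1$. The condition $\theta\ge0$ reads $r_1\ge 3/\alpha$. This is exactly the source of (R1).

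Whenever $\theta<0$ we do not discard information. Instead we interpolate, replacing $u_\rho$ by $|u_\rho|^{s}|u|^{1-s}$ with $|u_\rho|\le|u|$, so that effectively $u_\rho\in L^{\tilde r}$ for every $\tilde r$ between $r_1$ and $p$. Since $p=\frac{6}{3-\alpha}$ always lies between $\frac3\alpha$ and $\frac3{\alpha-1}$, only one-sided conditions are needed, and in $\mathcal A$ none at all. If the admissible range contains $p$, we take $\tilde r=p$ and recover the estimate in the proof of Lemma \ref{lemma I R}.

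For $J_2$ and $J_4$ we use $(q,q,r_1)$, respectively $(q/2,r_1,\infty)$. These give $\theta=\frac\beta3-\frac1{r_1}$ and hence (R2), with $p$ again serving as the interior reference exponent. The interpolation works because $\frac1q\cdot2+\frac1p$ sits at the borderline exactly when $3\le\alpha+2\beta\le5$.

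For $J_5$ we use $(p,q,r_2)$. This gives
\[
\theta=1-\tfrac1p-\tfrac1q-\tfrac1{r_2}=\tfrac{\alpha+\beta}{6}-\tfrac1{r_2},
\]
hence (R3). Here we interpolate $b_\rho$ between $L^{r_2}$ and $L^{q}$.

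\textbf{Step 2: the interpolation step.} This is the main obstacle. In the ``wrong'' direction, for example $\theta<0$ when $\alpha<1$ and $r_1$ is huge, we must choose the interpolated exponent $\tilde r$ so that $\theta=0$ exactly or $3\theta-1\le0$. We must also check that the resulting $\tilde r\in[\min(r_1,p),\max(r_1,p)]$ is genuinely attained under (R1)--(R3). This reduces to the linear inequalities listed in Remark \ref{rem:cases}, which I will verify region by region. In regions like $\mathcal G$ the two requirements for $J_1$ and $J_2$ conflict, and that case is excluded by hypothesis.

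\textbf{Step 3: vanishing in the limit.} We make sure that some factor tends to zero. We always keep a factor $\|u\|_{L^p(\mathcal C_R)}$, or $\|b\|_{L^q(\mathcal C_R)}$ for $J_5$, raised to a positive interpolation power; such tail norms tend to $0$. The only exception is $\tilde r=r_1=\infty$ together with $3\theta-1=0$. There we argue instead with the decay assumption $u,b\to0$ at infinity, or by slightly perturbing $\tilde r$ toward $p$, to obtain a tail factor. Then all five terms tend to zero, which proves the lemma.
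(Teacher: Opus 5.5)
Your proposal is correct and is essentially the paper's proof. It uses the same five-term splitting and the same H\"older bound $CR^{3\theta-1}\prod_i\|f_i\|_{L^{\varrho_i}(\mathcal C_R)}$ with $\theta\in[0,\frac13]$. It also uses the same interpolation of $u_\rho$ between $L^{r_1}$ and $L^{p}$ (and of $b_\rho$ between $L^{r_2}$ and $L^{q}$), which the paper packages as Lemma~\ref{lemma.exponent} by taking $1/\sigma$ to be the projection of $1/p$ onto $[\tau-\frac13,\tau]$ rather than checking region by region.

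That projection always gives $\sigma<\infty$, so the $r=\infty$ exception in your Step 3 never arises; in any case the vanishing tails of $u$, $b$, $\Pi_u$, $\Pi_b$ in $L^p$, $L^q$, $L^{p/2}$, $L^{q/2}$ on $\mathcal C_R$ would already supply decay. Two slips to fix. First, $p$ lies between $3/\alpha$ and $3/(\alpha-1)$ only for $1\le\alpha\le\frac53$; otherwise it sits on one side of the window, which is exactly why one-sided conditions suffice. Second, your Step 2 example is off: for $\alpha<1$ and large $r_1$ one has $\theta=\frac\alpha3-\frac1{r_1}>0$, not $\theta<0$.
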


\begin{proof}[Proof of Theorem \ref{thm.radial}]
The proof follows the lines of that of Theorem \ref{thm1}, and we only indicate the changes. We use the pressure $\Pi=\Pi_1+\Pi_2$ of \eqref{eq:pressure} and the cut-offs $\psi_R$, $\chi_R$ and $\Phi_R$ applied there. In this section we also write $\Pi_u:=\Pi_1$ and $\Pi_b:=-\Pi_2$, so that $\Pi=\Pi_u-\Pi_b$ and, by \eqref{eq:CZ},
\[
\|\Pi_u\|_{L^{\frac{p_\alpha}{2}}(\mathbb{R}^3)}\le C\,\|u\|_{L^{p_\alpha}(\mathbb{R}^3)}^2<+\infty,
\qquad
\|\Pi_b\|_{L^{\frac{p_\beta}{2}}(\mathbb{R}^3)}\le C\,\|b\|_{L^{p_\beta}(\mathbb{R}^3)}^2<+\infty .
\]
Since $\psi_R$ is radial, we have $\nabla\psi_R(x)=\partial_\rho\psi_R(x)\frac{x}{|x|}$, with $|\partial_\rho\psi_R|=|\nabla\psi_R|\le CR^{-1}$. Hence, we can write 
\[
u\cdot\nabla\psi_R=u_\rho\,\partial_\rho\psi_R,\qquad b\cdot\nabla\psi_R=b_\rho\,\partial_\rho\psi_R .
\]
Testing the equations with $\psi_Ru$ and $\psi_Rb$, using the Caffarelli--Silvestre extension and arguing exactly as in the proof of Theorem \ref{thm1}, we arrive at \eqref{eq3-energy}, and the two terms on the right-hand side of \eqref{eq3-energy} involving $u^*$ and $b^*$ tend to zero as $R\to\infty$, by Lemma \ref{lemma.with.E}. The only difference is the treatment of the flux term, which now we write using $u\cdot\nabla\psi_R=u_\rho\,\partial_\rho\psi_R$, $b\cdot\nabla\psi_R=b_\rho\,\partial_\rho\psi_R$, $\Pi=\Pi_u-\Pi_b$ and $|u\cdot b|\le|u||b|$:
\begin{multline}\label{FR}
\Big|\int_{\mathcal C_R}\Big(\frac{|u|^2}{2}+\frac{|b|^2}{2}+\Pi\Big)u_\rho\,\partial_\rho\psi_R\,dx-\int_{\mathcal C_R}(u\cdot b)\,b_\rho\,\partial_\rho\psi_R\,dx\Big|
\\
\leq
C
\int_{ \mathcal C_R }
\big(|u|^2+|b|^2\big)
| u_\rho |
| \partial_\rho\psi_R |
 dx
+
C
 \int_{ \mathcal C_R }
 \big(| \Pi_u  |+| \Pi_b  |\big)
 | u_\rho |
| \partial_\rho\psi_R |
  dx
+
C
 \int_{ \mathcal C_R }
 | u|\,| b  |
 | b_\rho |
| \partial_\rho\psi_R |
  dx .
\end{multline}
By Lemma \ref{lemma I R radial}, the right-hand side tends to zero as $R\to\infty$. Thus, taking the limit as $R\to\infty$ in \eqref{eq3-energy}, and using the monotone convergence theorem as in the proof of Theorem \ref{thm1}, we get
\[
C_\alpha\int_{\mathbb{R}^4_+} y^\mu |\overline{\nabla} u^*|^2 \, dx \, dy
+
C_\beta\int_{\mathbb{R}^4_+} y^\nu |\overline{\nabla} b^*|^2 \, dx \, dy = 0.
\]
Hence $u$ and $b$ are constant, and then, since $u\in L^{p_\alpha}(\mathbb R^3)$ and $b\in L^{p_\beta}(\mathbb R^3)$ by the Sobolev embedding, and the only constant function belonging to $L^{p}(\mathbb R^3)$, $p<\infty$, is the zero function, we conclude that $u\equiv 0$ and $b\equiv0$. This finishes the proof.
\end{proof}

\subsection{Proof of Lemma \ref{lemma I R radial}}\label{sub:radial}

To begin, we stress the fact that each term in the proof below is estimated by H\"older's inequality.
This requires an exponent $\sigma$ for $u_\rho$ or $b_\rho$ such that
\[
0\le \tau-\frac1\sigma\le\frac13
\]
for a suitable $\tau$. The following elementary lemma gives a
criterion for the existence of such an exponent.

\begin{lemma}\label{lemma.exponent}
Let $\tau\in(0,1)$, $p\in(1,\infty)$, $r\in[1,\infty]$, and
\[
g\in L^p(\mathbb R^3)\cap L^r(\mathbb R^3).
\]
Assume that
\begin{equation}\label{E}
\frac1r\le\tau
\quad\text{if}\quad \frac1p>\tau,
\qquad
\frac1r\ge\tau-\frac13
\quad\text{if}\quad
\frac1p<\tau-\frac13.
\end{equation}
Then there exists $\sigma\in(1,\infty)$ such that
\[
g\in L^\sigma(\mathbb R^3),
\qquad
0\le\tau-\frac1\sigma\le\frac13.
\]
\end{lemma}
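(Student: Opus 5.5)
The argument is interpolation between $L^p$ and $L^r$: by Hölder's inequality, $g\in L^p\cap L^r$ implies $g\in L^\sigma$ for every $\sigma$ with $1/\sigma$ between $1/p$ and $1/r$. So the task is to show that the closed interval $J$ with endpoints $1/p$ and $1/r$ meets the target interval $T:=[\tau-\tfrac13,\tau]\cap(0,1)$. We then take $1/\sigma$ in $J\cap T$, which automatically gives $0\le\tau-1/\sigma\le\tfrac13$.

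I would split into three cases according to the position of $1/p$ relative to $[\tau-\tfrac13,\tau]$.

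\emph{Case 1:} $\tau-\tfrac13\le 1/p\le\tau$. Take $\sigma=p$. No condition on $r$ is needed, and $\sigma=p\in(1,\infty)$ by hypothesis.

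\emph{Case 2:} $1/p>\tau$. By \eqref{E}, $1/r\le\tau<1/p$, so $\tau\in J$. Since $\tau\in(0,1)$, we may take $1/\sigma=\tau$, i.e.\ $\sigma=1/\tau\in(1,\infty)$. This gives $\tau-1/\sigma=0$.

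\emph{Case 3:} $1/p<\tau-\tfrac13$. This forces $\tau>\tfrac13$. By \eqref{E}, $1/r\ge\tau-\tfrac13>1/p$, so $\tau-\tfrac13\in J$, and $\tau-\tfrac13\in(0,\tfrac23)\subset(0,1)$. Take $\sigma=(\tau-\tfrac13)^{-1}\in(1,\infty)$, which gives $\tau-1/\sigma=\tfrac13$.

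In Cases 2 and 3 the chosen value $1/\sigma$ lies between $1/p$ and $1/r$. The interpolation inequality $\|g\|_{L^\sigma}\le\|g\|_{L^p}^{1-\lambda}\|g\|_{L^r}^{\lambda}$, with $\frac1\sigma=\frac{1-\lambda}{p}+\frac{\lambda}{r}$ and $\lambda\in[0,1]$, then gives $g\in L^\sigma$. This also covers $r=\infty$ (where $1/r=0$) and $r=1$.

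There is no real obstacle here. The only point needing care is checking that the chosen $\sigma$ lies strictly in $(1,\infty)$ rather than merely in $[1,\infty]$, and that is exactly what the strict bound $\tau\in(0,1)$ and, in Case 3, the positivity of $\tau-\tfrac13$ guarantee.
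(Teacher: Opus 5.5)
Your proposal is correct and follows the paper's proof essentially verbatim. The paper also projects $1/p$ onto $[\tau-\frac13,\tau]$, treats the same three cases with $\sigma=p$, $1/\sigma=\tau$, or $1/\sigma=\tau-\frac13$, and concludes by interpolation. Your check that $\tau>\frac13$ in Case 3 is just a more explicit form of the paper's closing remark that $0<1/\sigma\le\tau<1$ in all cases.
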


\begin{proof}
By interpolation, $g\in L^\sigma(\mathbb R^3)$ whenever
$1/\sigma$ lies between $1/p$ and $1/r$. We choose $1/\sigma$
by projecting $1/p$ onto the interval
$[\tau-\frac13,\tau]$, more precisely 
\begin{equation}\label{sigma.general}
\frac1\sigma
=
\min\left\{
\max\left\{\frac1p,\tau-\frac13\right\},\tau
\right\}.
\end{equation}
In the following we distinguish three cases. If
\[
\tau-\frac13\le\frac1p\le\tau,
\]
we take $\sigma=p$, and the conclusion follows.
If $1/p>\tau$, then $1/\sigma=\tau$. The first condition in
\eqref{E} gives $1/r\le\tau<1/p$, thus $1/\sigma$ lies between
$1/r$ and $1/p$, and then $g\in L^\sigma(\mathbb R^3)$. 
Finally, if $1/p<\tau-\frac13$, then
$1/\sigma=\tau-\frac13$. The second condition in \eqref{E} gives
\[
\frac1p<\tau
-
\frac13
\le
\frac1r
,
\]
 and thus, considering again interpolation, we get  $g\in L^\sigma(\mathbb R^3)$.
 In particular, note that in all three cases we have 
\[
0<\frac1\sigma\le\tau<1.
\]
Thus, we conclude that  $1<\sigma<\infty$, and
$0\le\tau-\frac1\sigma\le\frac13$. This finishes the proof.
\end{proof}
In the next, we apply this lemma with $|u_\rho|\le |u|$ and
$|b_\rho|\le |b|$. At this point is convenient to  recall that
\[
\frac1{p_\alpha}=\frac12-\frac\alpha6,
\qquad
\frac1{p_\beta}=\frac12-\frac\beta6.
\]
We use the lemma with
\[
(g,\tau,p,r)
=
\left(u_\rho,\frac{\alpha}{3},p_\alpha,r_1\right),
\quad
\left(u_\rho,\frac{\beta}{3},p_\alpha,r_1\right),
\quad
\left(b_\rho,\frac{\alpha+\beta}{6},p_\beta,r_2\right).
\]
For clarity of the exposition, the resulting exponents can be written explicitly as
\begin{equation}\label{sigma1}
\frac1{\sigma_1}
=
\min\left\{
\max\left\{
\frac12-\frac\alpha6,\frac{\alpha-1}{3}
\right\},
\frac{\alpha}{3}
\right\},
\end{equation}
\begin{equation}\label{sigma2}
\frac1{\sigma_2}
=
\min\left\{
\max\left\{
\frac12-\frac\alpha6,\frac{\beta-1}{3}
\right\},
\frac{\beta}{3}
\right\},
\end{equation}
and
\begin{equation}\label{sigma3}
\frac1{\sigma_3}
=
\min\left\{
\max\left\{
\frac12-\frac\beta6,\frac{\alpha+\beta-2}{6}
\right\},
\frac{\alpha+\beta}{6}
\right\}.
\end{equation}
At this point is convenient to  recall that
$\frac1{p_\alpha}=\frac12-\frac\alpha6,
$ and 
$\frac1{p_\beta}=\frac12-\frac\beta6$.
With this,  note that 
\begin{alignat*}{2}
\frac1{p_\alpha}>\frac{\alpha}{3}
&\iff \alpha<1,
&\qquad
\frac1{p_\alpha}<\frac{\alpha}{3}-\frac13
&\iff \alpha>\frac53,\\
\frac1{p_\alpha}>\frac{\beta}{3}
&\iff 
\alpha+2\beta<3,
&
\qquad
\frac1{p_\alpha}<\frac{\beta}{3}-\frac13
&\iff 
\alpha+2\beta>5,\\
\frac1{p_\beta}>\frac{\alpha+\beta}{6}
&\iff
 \alpha+2\beta<3,
&\qquad
\frac1{p_\beta}<\frac{\alpha
+\beta}{6}-\frac13
& 
\iff
 \alpha+2\beta 
 >
 5.
\end{alignat*}
As a consequence of this, the conditions \eqref{E} in the three applications above
are exactly the assumptions (R1), (R2), and (R3), respectively.

Hence, under the hypotheses of Theorem \ref{thm.radial}, we have
$
u_\rho\in L^{\sigma_1}(\mathbb R^3)
\cap L^{\sigma_2}(\mathbb R^3)
$, and 
$
b_\rho\in L^{\sigma_3}(\mathbb R^3)$, where
\begin{equation}\label{thetas}
\theta_1:=\frac{\alpha}{3}-\frac1{\sigma_1},
\qquad
\theta_2:=\frac{\beta}{3}-\frac1{\sigma_2},
\qquad
\theta_3:=\frac{\alpha+\beta}{6}-\frac1{\sigma_3}
\end{equation}
all belong to $[0,\frac13]$.  

\begin{proof}[Proof of Lemma \ref{lemma I R radial}]
To begin, recall that $p_\alpha=\frac{6}{3-\alpha}$ and $p_\beta=\frac{6}{3-\beta}$. Then, since $u\in\dot H^{\frac\alpha2}(\R^3)$ and $b\in\dot H^{\frac\beta2}(\R^3)$, by the Sobolev embedding we have $u\in L^{p_\alpha}(\R^3)$ and $b\in L^{p_\beta}(\R^3)$, and by the Calder\'on--Zygmund estimate \eqref{eq:CZ} we have $\Pi_u\in L^{\frac{p_\alpha}{2}}(\R^3)$ and $\Pi_b\in L^{\frac{p_\beta}{2}}(\R^3)$. Moreover, let $\sigma_1,\sigma_2,\sigma_3\in[1,\infty)$ be the exponents constructed in the previous subsection, so that
\[
u_\rho\in L^{\sigma_1}(\R^3)\cap L^{\sigma_2}(\R^3),\qquad b_\rho\in L^{\sigma_3}(\R^3),
\]
and the numbers $\theta_1,\theta_2,\theta_3$ defined in \eqref{thetas} belong to $[0,\frac13]$.

We first record an elementary estimate. Let $\varrho_1,\varrho_2,\varrho_3\in[1,\infty]$ be such that
\[
\theta:=1-\frac1{\varrho_1}-\frac1{\varrho_2}-\frac1{\varrho_3}\in\Bigl[0,\frac13\Bigr].
\]
Then, by H\"older's inequality with exponents $\varrho_1,\varrho_2,\varrho_3,\frac1\theta$, and using that $\|\partial_\rho\psi_R\|_{L^\infty}\le CR^{-1}$ and $|\mathcal C_R|\le CR^3$, we have, for all $R>1$,
\begin{multline}\label{eq:scaling.radial}
\int_{\mathcal C_R}|f_1||f_2||f_3||\partial_\rho\psi_R|\,dx
\le
\|f_1\|_{L^{\varrho_1}(\mathcal C_R)}\|f_2\|_{L^{\varrho_2}(\mathcal C_R)}\|f_3\|_{L^{\varrho_3}(\mathcal C_R)}
\|\partial_\rho\psi_R\|_{L^{1/\theta}(\mathcal C_R)}
\\
\le
CR^{3\theta-1}\prod_{i=1}^3\|f_i\|_{L^{\varrho_i}(\mathcal C_R)}
\le
C\prod_{i=1}^3\|f_i\|_{L^{\varrho_i}(\mathcal C_R)},
\end{multline}
where for $\theta=0$ the last factor in the first line is understood as an $L^\infty$ norm, and the last inequality holds because $3\theta-1\le0$ and $R>1$.

Now, to continue, we consider  
\begin{equation}
\begin{aligned}
&\int_{\mathcal C_R}\bigl(|u|^2+|b|^2\bigr)|u_\rho||\partial_\rho\psi_R|\,dx
+\int_{\mathcal C_R}\bigl(|\Pi_u|+|\Pi_b|\bigr)|u_\rho||\partial_\rho\psi_R|\,dx
+\int_{\mathcal C_R}|u||b||b_\rho||\partial_\rho\psi_R|\,dx\\
&\qquad=:I_1(R)+I_2(R)+I_3(R)+I_4(R)+I_5(R),
\end{aligned}
\end{equation}
where $I_1$ contains $|u|^2|u_\rho|$, $I_2$ contains $|b|^2|u_\rho|$, $I_3$ and $I_4$ contain $|\Pi_u||u_\rho|$ and $|\Pi_b||u_\rho|$ respectively, and $I_5$ contains $|u||b||b_\rho|$. With this in hand, we aim to prove that $\lim_{R\to+\infty}I_k(R)=0$ for $k=1,\dots,5$.

\smallskip
\noindent\textit{The term $I_1$.} Since $|u_\rho|\le|u|$ and $\frac1{p_\alpha}=\frac12-\frac\alpha6$, we apply \eqref{eq:scaling.radial} with $(\varrho_1,\varrho_2,\varrho_3)=(p_\alpha,p_\alpha,\sigma_1)$, for which
\[
\theta=1-\frac{2}{p_\alpha}-\frac1{\sigma_1}=\frac{\alpha}{3}-\frac1{\sigma_1}=\theta_1\in\Bigl[0,\frac13\Bigr].
\]
Hence
\[
I_1(R)\le C\|u\|^2_{L^{p_\alpha}(\R^3)}\|u_\rho\|_{L^{\sigma_1}(\mathcal C_R)}.
\]

\smallskip
\noindent\textit{The term $I_2$.} We apply \eqref{eq:scaling.radial} with $(\varrho_1,\varrho_2,\varrho_3)=(p_\beta,p_\beta,\sigma_2)$, for which $\theta=1-\frac{2}{p_\beta}-\frac1{\sigma_2}=\frac{\beta}{3}-\frac1{\sigma_2}=\theta_2\in[0,\frac13]$. Hence
\[
I_2(R)\le C\|b\|^2_{L^{p_\beta}(\R^3)}\|u_\rho\|_{L^{\sigma_2}(\mathcal C_R)}.
\]

\smallskip
\noindent\textit{The terms $I_3$ and $I_4$.} We apply \eqref{eq:scaling.radial} with $f_3\equiv1$ and $\varrho_3=\infty$, so that no volume factor appears and the only power of $R$ is the one in \eqref{eq:scaling.radial}. For $I_3$ we take $(\varrho_1,\varrho_2,\varrho_3)=(\frac{p_\alpha}{2},\sigma_1,\infty)$, which gives $\theta=1-\frac2{p_\alpha}-\frac1{\sigma_1}=\theta_1$, and for $I_4$ we take $(\varrho_1,\varrho_2,\varrho_3)=(\frac{p_\beta}{2},\sigma_2,\infty)$, which gives $\theta=\theta_2$. Then, by \eqref{eq:CZ},
\[
I_3(R)\le C\|\Pi_u\|_{L^{\frac{p_\alpha}{2}}(\R^3)}\|u_\rho\|_{L^{\sigma_1}(\mathcal C_R)}\le C\|u\|^2_{L^{p_\alpha}(\R^3)}\|u_\rho\|_{L^{\sigma_1}(\mathcal C_R)},
\]
\[
I_4(R)\le C\|\Pi_b\|_{L^{\frac{p_\beta}{2}}(\R^3)}\|u_\rho\|_{L^{\sigma_2}(\mathcal C_R)}\le C\|b\|^2_{L^{p_\beta}(\R^3)}\|u_\rho\|_{L^{\sigma_2}(\mathcal C_R)}.
\]

\smallskip
\noindent\textit{The term $I_5$.} We apply \eqref{eq:scaling.radial} with $(\varrho_1,\varrho_2,\varrho_3)=(p_\alpha,p_\beta,\sigma_3)$, for which $\theta=1-\frac1{p_\alpha}-\frac1{p_\beta}-\frac1{\sigma_3}=\frac{\alpha+\beta}{6}-\frac1{\sigma_3}=\theta_3\in[0,\frac13]$. Hence
\[
I_5(R)\le C\|u\|_{L^{p_\alpha}(\R^3)}\|b\|_{L^{p_\beta}(\R^3)}\|b_\rho\|_{L^{\sigma_3}(\mathcal C_R)}.
\]

Finally, since $u_\rho\in L^{\sigma_1}\cap L^{\sigma_2}$ and $b_\rho\in L^{\sigma_3}$ with $\sigma_1,\sigma_2,\sigma_3<\infty$, we have $\|u_\rho\|_{L^{\sigma_i}(\mathcal C_R)}\to0$ for $i=1,2$ and $\|b_\rho\|_{L^{\sigma_3}(\mathcal C_R)}\to0$ as $R\to+\infty$, being tails of convergent integrals. Since $\|u\|_{L^{p_\alpha}}$ and $\|b\|_{L^{p_\beta}}$ are finite, we get $\lim_{R\to+\infty}I_k(R)=0$ for $k=1,\dots,5$. Then, by gathering together these limits, we deduce the desired limit.
\end{proof}
 
 \medskip

\paragraph{\bf Datasets} Data sharing does not apply to this article as no datasets were generated or analyzed during the
current study.

\paragraph{\bf  Conflict of interest} In addition, the authors declare no conflict of interest and confirm that they contributed equally to this paper.


 \end{document}